\providecommand{\E}{\mathbb E}
\providecommand{\ed}{\mathrm e}
\providecommand{\diff}{\mathrm d}
\providecommand{\Tr}{\mathrm {Tr}}
\providecommand{\prob}{\mathbb P}
\providecommand{\D}{\mathbb D}
\providecommand{\C}{\mathbb C}
\numberwithin{equation}{section}
\newtheorem{theorem}{Theorem}[section]
\newtheorem{proposition}[theorem]{Proposition}
\newtheorem{lemma}[theorem]{Lemma}
\theoremstyle{definition}
\newtheorem{definition}[theorem]{Definition}
\newtheorem{remark}[theorem]{Remark}
\title{Characteristic polynomial of Generalized Ewens random permutations}
\author{Quentin François}
\address
{Quentin François: CEREMADE, CNRS, Université Paris-Dauphine, Université PSL, 75016 Paris, France
\& DMA, École normale supérieure, Université PSL, CNRS, 75005 Paris, France.}
\email{\href{mailto:quentin.francois@dauphine.psl.eu}{quentin.francois@dauphine.psl.eu}}
\begin{document}

\begin{abstract}
  We prove the convergence of the characteristic polynomial for random permutation matrices 
  sampled from the generalized Ewens distribution. Under this distribution, the measure of 
  a given permutation depends only on its cycle structure with weights 
  assigned to each cycle length. 
  The proof is based on uniform control of the characteristic polynomial 
  using results from the singularity analysis of generating functions, 
  together with the convergence of traces to explicit random variables 
  expressed via a Poisson family. 
  The limit function is the exponential of a Poisson series 
  which has already appeared in the case of uniform permutation matrices. 
  It is the Poisson analog of the Gaussian Holomorphic Chaos, 
  related to the limit of characteristic polynomials 
  for other matrix models such as Circular Ensembles, i.i.d.\ matrices, and Gaussian elliptic matrices.
\end{abstract}

\maketitle

\section{Introduction}

The study of the characteristic polynomial of random matrices 
has gained importance in the context of random analytic functions 
\cite{Coste_Lambert_Zhu,Lambert_Paquette} and random 
fields \cite{Hughes_Keating_Neil, Rider_Virag}.
Coefficients of characteristic polynomials exhibit some 
combinatorial structure as shown by Diaconis and Gamburd \cite{Diaconis_Gamburd} 
in the case of random unitary matrices sampled from the Haar measure. 
Instead of individual coefficients, one can consider the characteristic 
polynomial as a random variable in the space of analytic functions. 
The behavior of the characteristic polynomial outside of the support of the 
limit eigenvalue distribution is of particular interest. 
One main motivation for the latter is the analysis of outliers 
with respect to the global behavior of eigenvalues given by the 
convergence of the empirical eigenvalue distribution. 
This approach was followed by Bordenave, Chafaï and García-Zelada 
\cite{Bordenave_Chafai_Garcia}, proving 
the convergence of the characteristic polynomial of Girko matrices, that is, 
matrices with i.i.d.\ centered entries under a universal second order moment condition. 
This allowed them to prove a convergence of the spectral radius for such matrices 
to one which could not be obtained from the convergence of the eigenvalue 
measure to the uniform law on the unit disk.
Their work was inspired by the results of Basak and Zeitouni \cite{Basak_Zeitouni} who 
studied outliers for eigenvalues of Toeplitz matrices. \\
\\
The limit function obtained for the 
characteristic polynomial of Girko matrices in \cite{Bordenave_Chafai_Garcia} 
involves the 
exponential of a Gaussian analytic function. 
Such an expression is an example of a Gaussian log-correlated field, 
see \cite{Najnudel_Paquette_Simm} and reference therein. The corresponding random distribution 
was introduced as the holomorphic multiplicative chaos. It 
arises as the limit of the characteristic polynomial for Circular-$\beta$ Ensembles 
\cite{Chhaibi_Najnudel} and its Fourier coefficients are related to the 
enumeration of combinatorial objects called magic squares. 
The holomorphic multiplicative chaos also appears as the limit of the 
characteristic polynomial of Gaussian elliptic matrices which interpolate 
between Ginibre and GUE matrices \cite{Francois_Garcia}. This form of the 
limit was proved to be universal in \cite{Bordenave_Chafai_Garcia} for Girko 
matrices and is conjectured to hold for a larger class of elliptic matrices 
interpolating between i.i.d.\ and Hermitian models. \\
\\
Coste \cite{Coste_Bernoulli} considered the case of non-centered 
entries following Bernoulli distribution. The characteristic polynomial for 
such matrices in the sparse regime 
was shown to converge towards a random analytic function 
expressed as the exponential of a Poisson series. This form is the Poisson 
analog of the holomorphic multiplicative chaos and has connections to the enumeration of 
multiset partitions. The same function was also identified as the 
limit of characteristic polynomial for sums of random permutation matrices 
where the permutation follows the uniform distribution by Coste, Lambert and Zhu 
\cite{Coste_Lambert_Zhu}. The authors raised the question of extending 
their results to other measures on the space of permutations notably 
to the Ewens measure \cite{Ewens}, a measure in which the weight of a permutation depends 
only on its cycle structure. 
\\ 
\\
The goal of this paper is to answer the previous question on the 
convergence of the characteristic polynomial in the context of 
generalized Ewens distributed permutations, which encompasses the Ewens, 
and thus, uniform cases. The generalized Ewens distribution 
was introduced by Nikeghbali and Zeindler \cite{Nikeghbali_Zeindler} as a 
generalization of the classical Ewens distribution by assigning different weights 
to each cycle lengths. Following the results of Chhaibi, Najnudel and Nikeghbali 
\cite{Chhaibi_Najnudel_Nikeghbali} on the characteristic polynomial of Haar unitary matrices, Bahier 
\cite{Bahier} showed the convergence of the characteristic polynomial of Ewens 
permutation matrices at a microscopic scale around one 
and near irrational angles on the unit circle. 
Here, we consider the characteristic polynomial in a different regime namely inside 
the open unit disk where there are no eigenvalues. \\
\\
For $n \geqslant 1$, we denote by $S_n$ the group of permutations of $\{1, \dots, n\}$. 

\begin{definition}[Generalized Ewens measure, \cite{Nikeghbali_Zeindler}]
    Let $\Theta = (\theta_k)_{k \geqslant 1}$ be a sequence of positive real numbers. 
    For $n \geqslant 1$, the \textbf{generalized Ewens measure} 
    is the probability measure $\diff \prob_n^\Theta $ on $S_n$ 
    defined by 
    \begin{equation}
        \label{eq:def_gen_ewens}
        \diff \prob_n^\Theta [\sigma] 
        = \frac{1}{n! h_n^{\Theta}} \prod_{k=1}^n \theta_k^{C_k(\sigma)}, 
    \end{equation}
    where for a permutation $\sigma \in S_n$ and $k \geqslant 1$, $C_k(\sigma)$ is the number 
    of cycles of $\sigma$ with length $k$.
\end{definition}

\noindent
The Ewens measure corresponds to the case where the sequence $\Theta$ is constant equal to $\theta > 0$ in 
which case $h_n^{\Theta} = \binom{\theta+n-1}{n}$. The uniform measure on $S_n$ corresponds to the 
Ewens distribution with parameter $\theta = 1$. 
From the sequence $\Theta = (\theta_k)_{k \geqslant 1}$, 
one defines the formal power series as in \cite{Nikeghbali_Zeindler},
\begin{equation}
    \label{eq:g_and_G}
    g_\Theta(z) = \sum_{k \geqslant 1} \frac{\theta_k}{k}z^k 
    \text{ and } G_\Theta(z) = \exp(g_\Theta(z)) \ . 
\end{equation}
For the Ewens measure of parameter $\theta$, $g_\Theta$ and 
$G_\Theta$ are holomorphic in $\D$ with 
$g_\Theta(z) = - \theta \log(1 - z)$ and $G_\Theta(z) = (1-z)^{-\theta}$. 
By \cite[Lemma 2.6]{Hughes_Najnudel_Nikeghbali_Zeindler}, one has 
\begin{equation*}
    G_\Theta(z) = \sum_{n \geqslant 0} h_n^{\Theta} z^n,
\end{equation*}
where $h_n^{\Theta}$ are the constants in the definition of the generalized Ewens 
distribution \eqref{eq:def_gen_ewens}. 
\\
\\
In this paper, we consider characteristic polynomials of random matrices associated 
to random permutations sampled from the generalized Ewens distribution \eqref{eq:def_gen_ewens}. 
Since permutations $\sigma \in S_n$ can be viewed as 
permutation matrices of size $n$, we say that $A_n$ follows the 
generalized Ewens distribution if it is the matrix obtained from a 
permutation $\sigma$ sampled from \eqref{eq:def_gen_ewens}.
The characteristic polynomial $p_n(z) = \det(1 - zA)$ of a permutation 
matrix $A$ can be expressed as 
\begin{equation}
    p_n(z) = \prod_{k=1}^n (1 - z^k)^{C_k^{(n)}},
\end{equation}
where $\left(C_k^{(n)}, 1 \leqslant k \leqslant n \right)$ are the cycle lengths 
of the associated random permutation. 
Note that the eigenvalues of $A_n$ are explicit and are given by roots of 
unity located on the unit circle.
Our result aims at showing the convergence of $(p_n)_{n \geqslant 1}$ 
as a sequence of random holomorphic functions defined on the unit disk. 
As in \cite{Bordenave_Chafai_Garcia,Francois_Garcia,Coste_Bernoulli} and \cite{Coste_Lambert_Zhu}, 
we consider the limit of the characteristic polynomial in the region outside 
of the eigenvalue support, namely $p_n(z) = z^n \det(z^{-1} - A_n)$ so that 
for $z \in \D$, $z^{-1}$ lies outside of the unit circle and $\det(z^{-1} - A_n)$ 
does not vanish.

\section{Main result}

\subsection{Convergence of the characteristic polynomial}

For $n \geqslant 1$ and $\Theta = (\theta_k)_{k \geqslant 1}$ as above, 
we consider $A_n$ the random matrix associated to a 
permutation $\sigma$ sampled from \eqref{eq:def_gen_ewens}. 
In this paper, we consider characteristic polynomial 
\begin{equation}
    p_n(z) = \det (1 - z A_n) 
\end{equation}
inside the unit disk $z \in \D =\{ x \in \C: |x| < 1 \}$. 
Let us denote by $\mathcal{H}(\D)$ the space of holomorphic functions 
on $\D$ endowed with the topology of convergence on compact subsets of $\D$.
Our main result is the convergence of $p_n$ as a random variable in $\mathcal{H}(\D)$ 
in law towards a limit function $F \in \mathcal{H}(\D)$. 
The above convergence holds for parameters $\Theta$ such that the generating 
series $g_\Theta$ satisfies some conditions that we now define as
Definition \ref{def:log_class} which is an adaptation of a definition given 
in Section 5.2.1 of \cite{hwang1994}. One can also find it as Definition 
2.9 in \cite{Hughes_Najnudel_Nikeghbali_Zeindler} 
or Definition 2.8 in \cite{Nikeghbali_Zeindler}. 
Throughout this work, $\arg(z)$ denotes the principal argument of $z$ 
taking values in $(-\pi, \pi]$.

\begin{definition}[Logarithmic class function]
    \label{def:log_class}
    A function $g$ is said to be in $F(r, \gamma, K)$ 
    for $r > 0$, $\gamma \geqslant 0$ and $K \in \C$ if 
    \begin{itemize}
        \item There exists $R > r$ and $\phi \in (0, \pi/2)$ such that $g$ is 
        holomorphic in $\Delta(r, R, \phi) \setminus \{r \}$ where 
        $\Delta(r, R, \phi) = \{ z \in \C: |z| \leqslant R, |\arg(z-r)| \geqslant \phi \}$.
        \item As $z \rightarrow r$, $g(z) = -\gamma \log(1 - z/r) + K + O(z-r)$. 
    \end{itemize}
\end{definition}

\noindent
In the case of the Ewens measure of parameter $\theta$, we have 
$g_\Theta(z) = -\theta \log(1 - z)$ so that $g_\Theta \in F(1, \theta, 0)$. 
Note that if $\gamma > 0$, the parameter $r$ is unique. 
\\
\\
Our main result is Theorem \ref{th:conv} which gives the convergence of the 
characteristic polynomial towards a limit function for sequences $\Theta$ such 
that $g_\Theta$ satisfies the conditions of Definition \ref{def:log_class}.

\begin{theorem}[Convergence of the characteristic polynomial]
    \label{th:conv}
    Let $\Theta = (\theta_k)_{k \geqslant 1}$ be a sequence of 
    positive real numbers such 
    that $g_\Theta \in F(r, \gamma, K)$ for some $r > 0$, 
    $\gamma > 0$ and $K \in \C$.
    We have the convergence in law, for the topology of 
    local uniform convergence in $\D$:
    \begin{equation}
        p_n \overunderset{\diff}{n \to \infty}{\longrightarrow} F ,
    \end{equation}
    where
    \begin{equation}
        F(z) = \exp \left( - \sum_{k \geqslant 1} 
        \frac{z^k}{k} X_k \right), 
        \ X_k = \sum_{\ell | k} \ell Y_\ell \ ,
    \end{equation}
    with $(Y_\ell)_{\ell \geqslant 1}$ 
    independent Poisson random variables 
    with parameter $\frac{\theta_\ell}{\ell}r^\ell$.
\end{theorem}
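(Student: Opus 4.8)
The plan is to prove tightness of $(p_n)_{n\ge1}$ in $\mathcal H(\D)$ via a uniform second-moment estimate, and convergence of the finite-dimensional distributions via the known Poisson limit of cycle counts of generalized Ewens permutations; together these give the claimed convergence in law. As a first step, since the eigenvalues of $A_n$ lie on the unit circle, $p_n$ does not vanish on $\D$, and expanding $\log(1-z^k)=-\sum_{j\ge1}z^{kj}/j$ in $\log p_n(z)=\sum_{k=1}^nC_k^{(n)}\log(1-z^k)$ and grouping by $m=kj$ gives
\begin{equation*}
  -\log p_n(z)=\sum_{m\ge1}\frac{z^m}{m}\,\Tr(A_n^m),\qquad \Tr(A_n^m)=\sum_{k\mid m}k\,C_k^{(n)}.
\end{equation*}
The same manipulation shows that the function $F$ in the statement, namely $\exp(-\sum_k\tfrac{z^k}{k}X_k)$, equals $\prod_{\ell\ge1}(1-z^\ell)^{Y_\ell}$, which is a well-defined random element of $\mathcal H(\D)$ because $\sum_\ell Y_\ell\sup_{|z|\le\rho}|\log(1-z^\ell)|<\infty$ a.s., for every $\rho<1$ (its mean is $\sum_\ell\frac{\theta_\ell}{\ell}r^\ell\sup_{|z|\le\rho}|\log(1-z^\ell)|$, which is finite since $g_\Theta$ has radius of convergence $r$ and $r\rho<r$). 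So it suffices to prove $p_n\to\prod_\ell(1-z^\ell)^{Y_\ell}$ in law in $\mathcal H(\D)$.

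For the finite-dimensional distributions, note that for $n\ge j$ the $j$-th Taylor coefficient of $p_n$ equals $[z^j]\prod_{k=1}^{j}(1-z^k)^{C_k^{(n)}}=P_j\big(C_1^{(n)},\dots,C_j^{(n)}\big)$ for a fixed polynomial $P_j$, the cycles of length $>j$ not affecting degrees $\le j$; the $j$-th Taylor coefficient of $\prod_\ell(1-z^\ell)^{Y_\ell}$ is the same polynomial $P_j(Y_1,\dots,Y_j)$. By the singularity analysis of generalized Ewens measures (see \cite{Hughes_Najnudel_Nikeghbali_Zeindler,Nikeghbali_Zeindler}), for each fixed $b$ one has $(C_1^{(n)},\dots,C_b^{(n)})\to(Y_1,\dots,Y_b)$ in law, the $Y_\ell$ being independent and $\mathrm{Poisson}(\tfrac{\theta_\ell}{\ell}r^\ell)$. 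By the continuous mapping theorem this yields joint convergence of every finite family of Taylor coefficients of $p_n$ to those of $\prod_\ell(1-z^\ell)^{Y_\ell}$.

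For tightness, the generalized Ewens weighting and the exponential formula for the symmetric group give, for any bounded $f\colon\N\to\C$,
\begin{equation*}
  \sum_{n\ge0}z^n\,h_n^\Theta\,\E_n^\Theta\!\Big[\prod_{k\ge1}f(k)^{C_k}\Big]
  =\exp\Big(\sum_{k\ge1}\frac{\theta_k f(k)}{k}z^k\Big).
\end{equation*}
Choosing $f(k)=|1-w^k|^2=1-w^k-\bar w^k+|w|^{2k}$ for $w\in\D$ factors the right-hand side as $G_\Theta(z)\,\Phi_w(z)$ with $\Phi_w(z)=\exp\big(-g_\Theta(wz)-g_\Theta(\bar wz)+g_\Theta(|w|^2z)\big)$. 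For $|w|\le\rho<1$ the function $\Phi_w$ is holomorphic on a disk of radius $>r$ and jointly continuous in $(w,z)$ there, so applying the transfer theorem of singularity analysis to $G_\Theta\Phi_w$ — whose dominant singularity is that of $G_\Theta$, at $z=r$ — yields $\E_n^\Theta[|p_n(w)|^2]=[z^n](G_\Theta\Phi_w)/h_n^\Theta\to\Phi_w(r)$ as $n\to\infty$, uniformly for $|w|\le\rho$; in particular $\sup_n\sup_{|w|\le\rho}\E_n^\Theta[|p_n(w)|^2]<\infty$. Since $p_n$ is a polynomial, for $\rho<\rho'<1$ one has $\sup_{|z|\le\rho}|p_n(z)|^2\le\big(1-(\rho/\rho')^2\big)^{-1}\tfrac1{2\pi}\int_0^{2\pi}|p_n(\rho'e^{i\theta})|^2\,d\theta$, whence $\sup_n\E\big[\sup_{|z|\le\rho}|p_n|^2\big]<\infty$, and by Montel's and Prokhorov's theorems $(p_n)$ is tight in $\mathcal H(\D)$.

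Finally, a law on $\mathcal H(\D)$ is determined by the joint laws of finitely many Taylor coefficients (these maps are continuous and jointly injective, and $\mathcal H(\D)$ is Polish), so by the tightness above every subsequence of $(p_n)$ has a further subsequence converging in law, and by the second paragraph the limit of any such subsequence is the law of $\prod_\ell(1-z^\ell)^{Y_\ell}=F$; hence $p_n\to F$ in law in $\mathcal H(\D)$. The main obstacle will be the tightness step: the second-moment bound must hold uniformly in $n$ \emph{and} in $w$ over compact subsets of $\D$, which requires pinning down the singularity structure of $G_\Theta\Phi_w$ at $z=r$ and applying the transfer theorems with control uniform in $w$ (and some care with the degenerate case $\gamma=0$ in the asymptotics of $h_n^\Theta$). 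Everything else is elementary complex analysis or a direct appeal to the cited Poisson convergence of cycle counts.
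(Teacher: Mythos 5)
Your proposal is correct and follows the same overall skeleton as the paper: tightness via a uniform second-moment bound obtained from the cycle-index generating function $\sum_n h_n^\Theta \E[|p_n(w)|^2]z^n = G_\Theta(z)\,\Phi_w(z)$ plus singularity analysis at $z=r$, and finite-dimensional convergence of Taylor coefficients via the Poisson limit of the cycle counts $(C_1^{(n)},\dots,C_b^{(n)})$ from \cite{Nikeghbali_Zeindler}. The one place where you genuinely diverge is the identification of the limit. The paper routes the coefficients through the traces $\Tr[A_n^m]=\sum_{\ell\mid m}\ell C_\ell^{(n)}$ and Newton-type polynomials, and must then prove separately (Theorem \ref{th:poisson_expression}) that the resulting power series is $\exp(-f)$ on all of $\D$; this requires Proposition \ref{prop:convergence_radius}, proved by coupling the $Y_\ell$ with the Poisson variables of \cite{Coste_Lambert_Zhu} to bound the radius of convergence of $\sum_k X_k z^k/k$. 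You instead read off the $j$-th coefficient of $p_n$ directly as a fixed polynomial in $(C_1^{(n)},\dots,C_j^{(n)})$, observe that the limit coefficients are the corresponding polynomials in $(Y_1,\dots,Y_j)$, i.e.\ the coefficients of $\prod_\ell(1-z^\ell)^{Y_\ell}$, and justify the a.s.\ local convergence of that product (and its equality with $\exp(-\sum_k X_kz^k/k)$ by absolute rearrangement) via the first-moment bound $\sum_\ell \tfrac{\theta_\ell}{\ell}r^\ell\sup_{|z|\le\rho}|\log(1-z^\ell)|\lesssim g_\Theta(r\rho)<\infty$. This is a cleaner and more elementary substitute for the paper's Section 6. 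On the tightness side you correctly flag, but do not carry out, the only technically delicate point: making the transfer theorem uniform in $w$ on compacts (including the case $\gamma=0$); this is precisely what the paper's Proposition \ref{prop:second_order_moment} does by tracking the $w$-dependence of the error terms in Hwang's contour argument, so your sketch would need that same verification to be complete.
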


\noindent
The previous theorem gives in particular the convergence of the characteristic 
polynomial for Ewens permutation matrices. Indeed, for constant $\theta$, the 
function $g_\Theta \in F(1, \theta, 0)$ so that $p_n$ converges towards 
the limit function as conjectured in \cite{Coste_Lambert_Zhu}. 

\begin{figure}[ht]
    \centering
    \begin{minipage}{0.45\textwidth}
        \centering
        \includegraphics[scale=0.23]{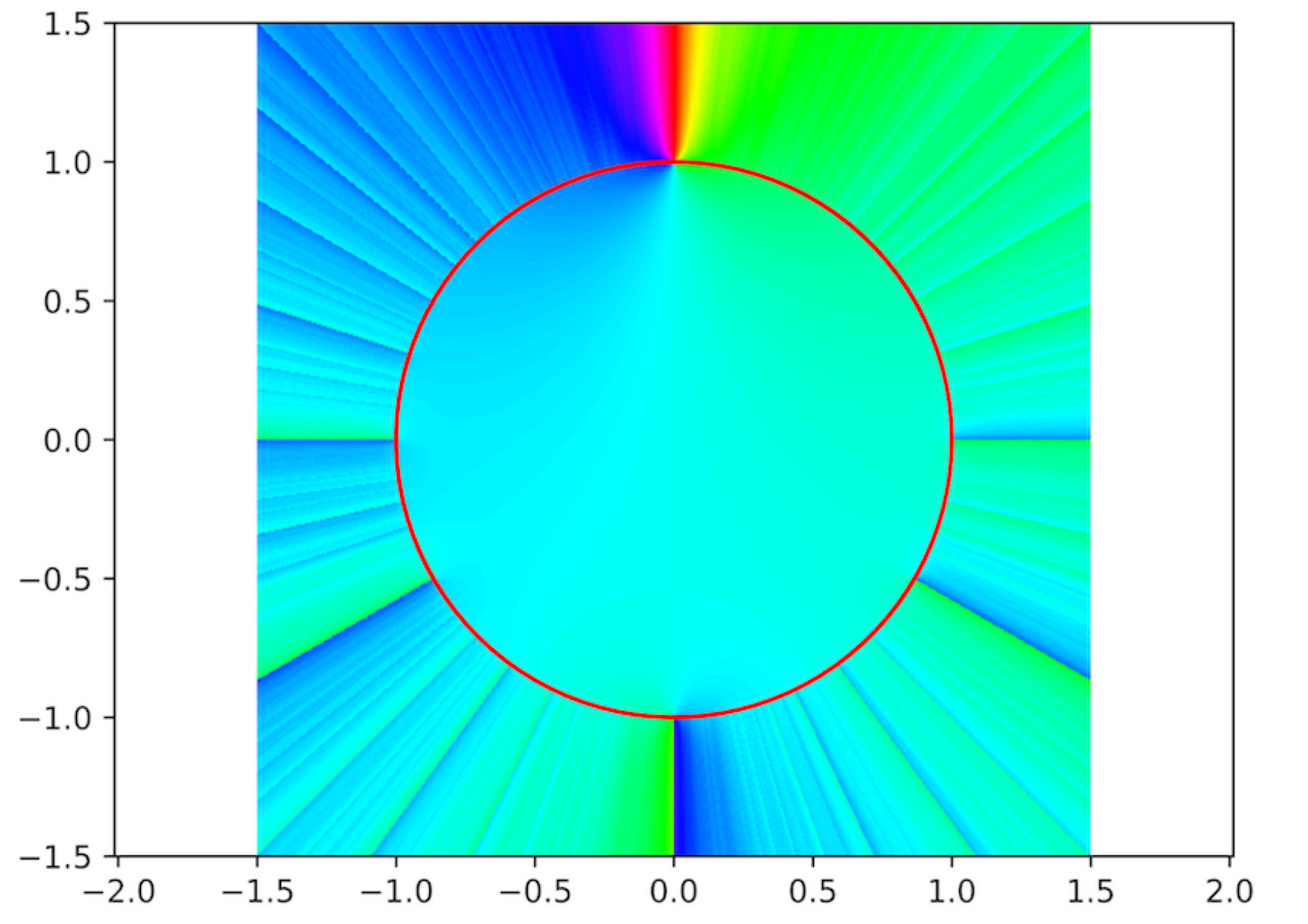}
    \end{minipage}
    \hfill
    \begin{minipage}{0.45\textwidth}
        \centering
        \includegraphics[scale=0.23]{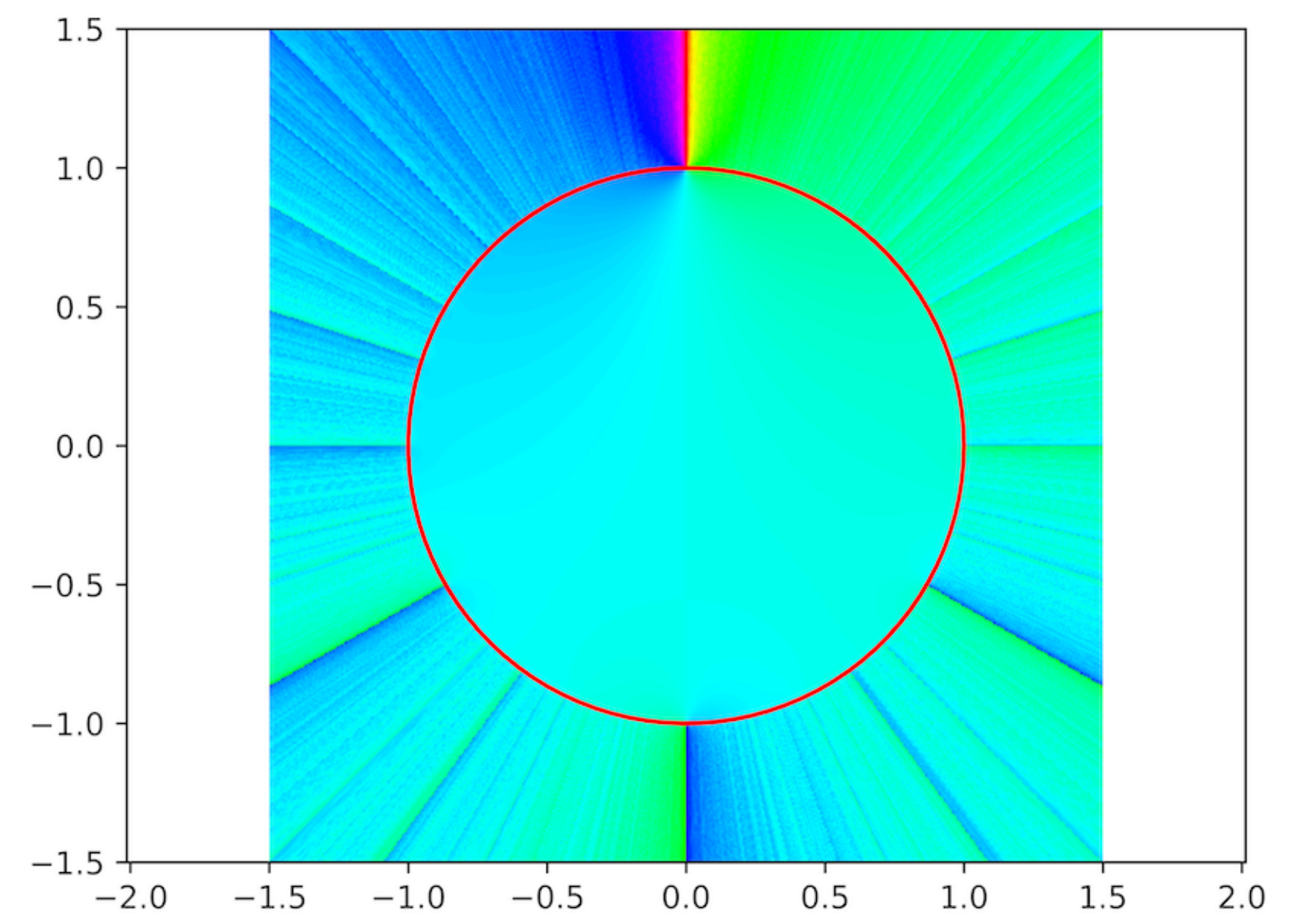}
    \end{minipage}
    \caption{Phase portrait of $p_n$ for an Ewens matrix of 
    size $n=10000$ with parameter $\theta=100$ (left) and phase portrait of the 
    limit function with same parameter (right). The unit circle is represented in red.}
    \label{fig:both}
\end{figure}

\begin{remark}[Outside region]
    Theorem \ref{th:conv} deals with the convergence in law for $z \in \D$ so that 
    $p_n(z) = \det(1 - zA_n)$ does not vanish as eigenvalues of $A_n$ are located on the 
    unit circle. One can extend the previous to values of $p_n(z)$ for $z$ in 
    $\C \setminus \overline{\D} = \{ z \in \C: |z| > 1 \}$ under suitable normalization. 
    Indeed, notice that the generalized Ewens distribution \eqref{eq:def_gen_ewens} 
    is invariant under inversion, 
    that is, if $\sigma$ has distribution \eqref{eq:def_gen_ewens} then so does $\sigma^{-1}$ 
    as they both have the same cycle structure. Thus, $A_n = A_n^{-1}$ in law.
    Furthermore, for $z \in \C \setminus \overline{\D}$,
    $\det(1 - zA_n) = (-z)^n \det(A_n) \det(1 - z^{-1}A_n^{-1})$ so that 
    if $\tilde{p}_n(z) =  \frac{p_n(z)}{(-z)^n \det(A_n)}$ 
    and $\iota(z) = \frac{1}{z}$, Theorem \ref{th:conv}, 
    gives the convergence in 
    law on $\C \setminus \overline{\D}$ of $\tilde{p}_n$ to $F \circ \iota$. \\
\end{remark}

\subsection{Method of proof}
The proof of Theorem \ref{th:conv} relies on the same structure as in \cite{Bordenave_Chafai_Garcia}, 
which is recalled in Lemma \ref{lemma:conditions_a_b}. 
It is a consequence of the general fact stated in \cite{Shirai} that 
a tight sequence of holomorphic functions in $\mathcal{H}(\D)$ whose coefficients 
convergence in law for finite dimensional distributions converges to a random 
analytic function.
We first show that the sequence $(p_n)_{n \geqslant 1}$ is tight which is Theorem \ref{th:tightness} 
proved in Section \ref{sec:tightness}. 
Here, tightness is archieved by a uniform control of the second moment of $p_n$. 
This control relies on results from Hwang \cite{hwang1994} on singularity analysis 
for generating functions. 
The finite dimensional convergence of coefficients is obtained by 
showing the convergence of traces of powers, 
see the discussion above Theorem \ref{th:conv_of_traces}.
The convergence of traces for generalized Ewens matrices 
was done in \cite{Hughes_Najnudel_Nikeghbali_Zeindler} and 
\cite{Nikeghbali_Zeindler}. We recall their results 
in Section \ref{sec:conv_traces} where Theorem \ref{th:conv_of_traces} is proved. 
From these two results, one is able to derive the convergence of $p_n$ 
towards a random analytic function $F$. 
The fact that $F$ coincides with the exponential of a Poisson series 
is the purpose of Theorem \ref{th:poisson_expression} which is 
proved in section \ref{sec:poisson_expression}.
In the rest of the paper, we assume that $\Theta$ is fixed and 
we write $g$ and $G$ for the functions defined in \eqref{eq:g_and_G} 
for notation convenience.

\begin{remark}[Extension to sums of permutations]
    In the case of a sum of $d \geqslant 1$ Ewens random matrices with constant parameter $\theta$, 
    the finite dimensional convergence of traces is given as in Theorem 3.2 of \cite{Coste_Lambert_Zhu}, 
    with parameters of Poisson variables depending on $\theta$, see the discussion in Appendix B therein. 
    An extension of this argument to the generalized Ewens distribution could identify the limit 
    random variables for more than one factor.   
    However, the tightness criterion in \cite{Coste_Lambert_Zhu} relies on exchangeability, both for 
    $d=1$ and $d > 1$ uniform permutations, which no longer holds for non-uniform permutations. Here, we are able to 
    show tightness for $d=1$ in the non-uniform case by relating the characteristic polynomial 
    to generating functions, see Proposition \ref{prop:second_order_moment}. The extension 
    to $d > 1$ factors is more involved as no such relation to generating functions is known. 
\end{remark}

\section{Proof of Theorem \ref{th:conv}}

Recall that $\mathcal{H}(\D)$ denotes the space of analytic functions on $\D$ 
endowed with the topology of local uniform convergence. In order to show the 
convergence in law of a sequence $(f_n)_{n \geqslant 1}$ in $\mathcal{H}(\D)$, we 
rely on Lemma \ref{lemma:conditions_a_b} which is close to Proposition 2.5 in \cite{Shirai}. 
It is also stated as Lemma 3.2 in \cite{Bordenave_Chafai_Garcia} and proved therein.

\begin{lemma}[Tightness and convergence of coefficients imply convergence of functions]
	\label{lemma:conditions_a_b}
	Let $\{f_n \}_{n \geqslant 1}$ be a sequence of random elements in $\mathcal{H}(\D)$ and denote 
	the coefficients of $f_n$ by $(\xi_k^{(n)} )_{k \geqslant 0}$ so that for all $z \in \D$, 
	$f_n(z) = \sum_{k \geqslant 0} \xi_k^{(n)} z^k$. Suppose also that the following conditions hold.
	\begin{itemize}
		\item[$(a)$] The sequence $\{f_n \}_{n \geqslant 1}$ is
		a tight sequence of random elements of $\mathcal{H}(\D)$.
		\item[$(b)$] 
		There exists a sequence $(\xi_k)_{k \geqslant 0}$ of 
		random variables such that,
		for every $m \geqslant 0$, the vector $\left(\xi_0^{(n)}, \dots, \xi_m^{(n)}\right)$
		converges in law as $n \to \infty$ to $(\xi_0, \dots, \xi_m)$.
	\end{itemize}
	Then, $f(z) = \sum_{k \geqslant 0} \xi_k z^k$ is a well-defined function in $\mathcal{H}(\D)$ and $f_n$
	converges in law towards $f$ in $\mathcal{H}(\D)$ for the topology of local uniform convergence.
\end{lemma}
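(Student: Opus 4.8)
The plan is to combine Prokhorov's theorem with the elementary fact that a law on $\mathcal{H}(\D)$ is characterized by the joint laws of finitely many Taylor coefficients. Recall first that $\mathcal{H}(\D)$, with the topology of local uniform convergence, is a Polish space: it is metrizable by $d(f,g) = \sum_{j \geqslant 1} 2^{-j}\min\bigl(1,\ \sup_{|z| \leqslant 1-1/j}|f(z)-g(z)|\bigr)$, complete by the Weierstrass theorem, and separable since polynomials with coefficients in $\mathbb{Q}+i\mathbb{Q}$ are dense. Moreover, for each $k \geqslant 0$ the coefficient functional $\pi_k\colon f \mapsto \xi_k = \frac{1}{2\pi i}\oint_{|z|=\rho}\frac{f(z)}{z^{k+1}}\,\diff z$ is continuous on $\mathcal{H}(\D)$, and the family $(\pi_k)_{k \geqslant 0}$ separates points. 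Since $\mathcal{H}(\D)$ is separable, the general fact that on a separable metric space a countable point-separating family of continuous functions generates the Borel $\sigma$-algebra shows that the Borel $\sigma$-algebra of $\mathcal{H}(\D)$ is generated by $(\pi_k)_{k \geqslant 0}$; consequently the law of a random element of $\mathcal{H}(\D)$ is determined by the finite-dimensional laws of $(\pi_k)_{k \geqslant 0}$.

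Next, one checks that $f(z) = \sum_{k \geqslant 0}\xi_k z^k$ is almost surely a well-defined element of $\mathcal{H}(\D)$. Fix $\rho \in (0,1)$ and $\varepsilon > 0$. Tightness $(a)$ together with Montel's theorem yields $M = M(\rho,\varepsilon)$ such that, setting $\mathcal{A}_M = \{g \in \mathcal{H}(\D) : |\pi_k(g)| \leqslant M\rho^{-k} \text{ for all } k \geqslant 0\}$ — a closed subset of $\mathcal{H}(\D)$ by continuity of the $\pi_k$ and Cauchy's estimates — one has $\prob(f_n \in \mathcal{A}_M) \geqslant 1-\varepsilon$ for every $n$. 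Combined with $(b)$ and the portmanteau theorem applied along any subsequential limit in law (which exists by $(a)$), this forces $\prob\bigl(|\xi_k| \leqslant M\rho^{-k} \text{ for all } k\bigr) \geqslant 1-\varepsilon$, hence $\limsup_k |\xi_k|^{1/k} \leqslant \rho^{-1}$ with probability at least $1-\varepsilon$. Letting $\varepsilon \downarrow 0$ and $\rho \uparrow 1$ shows the radius of convergence of $\sum_{k}\xi_k z^k$ is almost surely at least $1$, so $f \in \mathcal{H}(\D)$ almost surely.

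Then comes the main point. By $(a)$ and Prokhorov's theorem, $(f_n)_{n \geqslant 1}$ is relatively compact in law in $\mathcal{H}(\D)$: every subsequence admits a further subsequence $(f_{n_j})$ converging in law to some random element $\widehat{f}$ of $\mathcal{H}(\D)$. Since each $\pi_k$ is continuous, the continuous mapping theorem gives $(\pi_0(f_{n_j}),\dots,\pi_m(f_{n_j})) \to (\pi_0(\widehat{f}),\dots,\pi_m(\widehat{f}))$ in law for every $m \geqslant 0$; comparing with $(b)$, the vector $(\pi_0(\widehat{f}),\dots,\pi_m(\widehat{f}))$ has the same law as $(\xi_0,\dots,\xi_m)$. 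By the first paragraph the law of $\widehat{f}$ on $\mathcal{H}(\D)$ is thereby uniquely determined — it is the unique law whose coefficient functionals have the prescribed finite-dimensional distributions, which by the second paragraph is indeed supported on $\mathcal{H}(\D)$ and which we take to be the law of $f$. Hence all subsequential limits in law of $(f_n)$ coincide with $f$ in law, and the standard subsequence argument upgrades this to $f_n \to f$ in law in $\mathcal{H}(\D)$.

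The only genuinely delicate ingredient is the measure-theoretic identification in the first paragraph: that the Borel $\sigma$-algebra of $\mathcal{H}(\D)$ equals the $\sigma$-algebra generated by the coefficient maps $(\pi_k)_{k \geqslant 0}$, which relies on the separability of $\mathcal{H}(\D)$. Everything else is a routine combination of Montel's theorem, Cauchy's estimates, Prokhorov's theorem, the portmanteau theorem and the continuous mapping theorem; this matches the argument of Proposition 2.5 in \cite{Shirai} and Lemma 3.2 in \cite{Bordenave_Chafai_Garcia}.
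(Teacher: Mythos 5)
Your proof is correct and follows the standard route --- Prokhorov's theorem plus the fact that the Taylor-coefficient functionals $(\pi_k)_{k\geqslant 0}$ generate the Borel $\sigma$-algebra of the Polish space $\mathcal{H}(\D)$, so that finite-dimensional coefficient laws determine the law of a random analytic function --- which is precisely the argument of the references the paper points to for this lemma (Proposition 2.5 of \cite{Shirai} and Lemma 3.2 of \cite{Bordenave_Chafai_Garcia}); the paper itself supplies no proof of its own. One small caveat: the ``general fact'' you invoke in the first paragraph requires $\mathcal{H}(\D)$ to be Polish (via the Lusin--Souslin theorem, or a direct argument writing closed balls as countable suprema over rational points), not merely separable metric, but since you have already established completeness nothing is lost.
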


\noindent
We thus need to show that the sequence $(p_n)_{n \geqslant 1}$ is tight and then 
study the limit of finite dimensional distributions for its coefficients. 
The first part is given by Theorem \ref{th:tightness} 
which is proved in section \ref{sec:tightness}. 

\begin{theorem}[Tightness]
    \label{th:tightness}
    The sequence $(p_n)_{n \geqslant 1}$ is tight in $\mathcal{H}(\D)$.
\end{theorem}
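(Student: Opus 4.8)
The plan is to reduce tightness in $\mathcal{H}(\D)$ to a uniform second-moment bound on compact subsets, following the strategy recalled in \cite{Bordenave_Chafai_Garcia}. Concretely, by standard criteria for tightness of random holomorphic functions (see e.g.\ \cite{Shirai}), it suffices to show that for every $\rho \in (0,1)$ there is a constant $C_\rho < \infty$ such that $\sup_{n \geqslant 1}\, \E\!\left[\,\sup_{|z| \leqslant \rho} |p_n(z)|^2\,\right] \leqslant C_\rho$; by subharmonicity of $|p_n|^2$ and the maximum principle, together with Cauchy's integral formula applied on a slightly larger circle of radius $\rho' \in (\rho,1)$, this in turn follows from the pointwise bound $\sup_{n \geqslant 1}\,\sup_{|z| = \rho'} \E\!\left[|p_n(z)|^2\right] \leqslant C'_{\rho'}$. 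So the crux is a uniform-in-$n$ bound on $\E[|p_n(z)|^2]$ for $|z| = \rho' < 1$.

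The next step is to compute $\E[|p_n(z)|^2]$ explicitly. Since $p_n(z) = \prod_{k=1}^n (1 - z^k)^{C_k^{(n)}}$, we have $|p_n(z)|^2 = \prod_{k=1}^n |1 - z^k|^{2 C_k^{(n)}}$, so
\begin{equation*}
    \E\!\left[|p_n(z)|^2\right] = \E\!\left[\prod_{k=1}^n \left(|1-z^k|^2\right)^{C_k^{(n)}}\right].
\end{equation*}
The generalized Ewens distribution has the feature that multiplicative functionals of the cycle counts have a clean generating-function expression: weighting each cycle of length $k$ by an extra factor $t_k$ amounts to replacing $\theta_k$ by $\theta_k t_k$, and by the formula $G_\Theta(z) = \sum_n h_n^\Theta z^n$ from \cite{Hughes_Najnudel_Nikeghbali_Zeindler} one gets
\begin{equation*}
    \E\!\left[\prod_{k=1}^n t_k^{C_k^{(n)}}\right] = \frac{[u^n]\, \exp\!\left(\sum_{k \geqslant 1} \frac{\theta_k t_k}{k} u^k\right)}{h_n^\Theta}.
\end{equation*}
Taking $t_k = |1 - z^k|^2 = (1-z^k)(1-\bar z^k)$ and writing $n! h_n^\Theta = [u^n] G_\Theta(u) \cdot n!$ in the denominator, one is left with a ratio of $n$-th Taylor coefficients of two explicit analytic functions, both of which have their dominant singularity at $u = r$ of logarithmic type because $g_\Theta \in F(r,\gamma,K)$; note that $\sum_k \frac{\theta_k}{k}|1-z^k|^2 u^k = g_\Theta(u) - \big(g_\Theta(zu)\,\overline{?}\big)\dots$ — more precisely it equals $g_\Theta(u) + \overline{g_\Theta(\bar z u)} $-type combinations whose singularities at $|u| = r$ are controlled since $|z| < 1$ keeps the shifted terms regular there.

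The main obstacle, and the technical heart of the argument, is to extract the asymptotics of these coefficient ratios uniformly in $z$ on the circle $|z| = \rho'$. This is exactly where Hwang's singularity analysis \cite{hwang1994} enters: one must check that the perturbed generating function $\exp\!\big(\sum_k \frac{\theta_k}{k}|1-z^k|^2 u^k\big)$ still belongs to a logarithmic class near $u = r$, with the same singularity location $r$ and an exponent and constant that depend continuously (and boundedly) on $z$, so that the transfer theorem yields $[u^n](\cdots) \sim c(z)\, n^{\alpha(z)-1} r^{-n}/\Gamma(\alpha(z))$ with $c(z), \alpha(z)$ bounded on the compact set $\{|z| = \rho'\}$. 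Dividing by the asymptotics of $h_n^\Theta$ (the $z$-independent case $t_k \equiv 1$) the $r^{-n}$ factors cancel and one obtains $\E[|p_n(z)|^2] = O(n^{\beta(z)})$ with $\beta(z)$ bounded; a little care is needed to confirm that the polynomial power is in fact $O(1)$ — this should come out because the extra factor $\sum_k \frac{\theta_k}{k}(|1-z^k|^2 - 1)u^k$ is analytic up to and a bit beyond $|u|=r$ (each term $z^k, \bar z^k$ contributes singularities only at $|u| = r/|z| > r$), hence contributes no change to the exponent $\gamma$, only a bounded multiplicative constant $\exp$ of its value at $u=r$. Verifying this analyticity-beyond-$r$ claim and the required uniformity in $z$ is the step I expect to demand the most work; everything after it is the routine passage from a uniform moment bound to tightness.
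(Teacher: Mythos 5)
Your proposal follows essentially the same route as the paper: reduce tightness to a uniform bound on $\sup_{|z|\leqslant \rho}\E[|p_n(z)|^2]$ via subharmonicity, express $\E[|p_n(z)|^2]$ through the generating-function identity for multiplicative functionals of the cycle counts (the paper cites this as Corollary 3.6 of \cite{Hughes_Najnudel_Nikeghbali_Zeindler}, giving $\sum_n t^n h_n \E[|p_n(z)|^2]=\exp(g(t))\,G(t|z|^2)/(G(tz)G(t\bar z))$, which is exactly your $t_k=|1-z^k|^2$ substitution since $|1-z^k|^2=1-z^k-\bar z^k+|z|^{2k}$), and then apply Hwang's singularity analysis, using that the $z$-dependent factor is analytic in a disk of radius strictly larger than $r$ when $|z|<1$ so that the exponent is unchanged and the error is uniform on compacts. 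Your identification of the uniformity-in-$z$ check as the technical heart matches the paper's proof of Proposition \ref{prop:second_order_moment}.
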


\noindent
It remains to study the coefficients of $p_n$. Let us write 
\begin{equation*}
    p_n(z) = 1 + \sum_{k =1}^{n} (-z)^k \Delta_k(A_n)
\end{equation*}
where $\Delta_k(A)$ is the coefficient of $z^k$ in $\det(1+zA)$. 
Coefficients $\Delta_k(A_n)$ can be expressed via traces 
$\left( \Tr \left[A_n^\ell \right], 1 \leqslant \ell \leqslant k \right)$ so that
\begin{equation*}
    \Delta_k(A_n) = \frac{1}{k!} P_k 
    \left(\Tr \left[A_n^1 \right], \dots, \Tr \left[A_n^k \right] \right)
\end{equation*}
where the polynomials $P_k$ do not depend on $n$. In order to study the convergence 
in law of coefficients $\left(\Delta_1(A_n), \dots, \Delta_k(A_n) \right)$, it suffices to study 
the convergence of traces $\left(\Tr[A_n^1], \dots, \Tr[A_n^k] \right)$ which is given by 
Theorem \ref{th:conv_of_traces}. Recall that $r$ denotes the radius of convergence of $g$, 
see Definition \ref{def:log_class}.

\begin{theorem}[Convergence of coefficients]
    \label{th:conv_of_traces}
    For $k \geqslant 1$, we have the convergence in law
    \begin{equation}
        \left( \Tr[A_n], \dots, \Tr \left[A_n^k \right]\right)
        \overunderset{\diff}{n \to \infty}{\longrightarrow} 
        (X_1, \dots, X_k) \ ,
    \end{equation}
    where 
    \begin{equation}
        X_k = \sum_{\ell | k} \ell Y_\ell
    \end{equation}
    with $(Y_\ell, \ell \geqslant 0)$ are independent Poisson random variables 
    with parameter $\frac{\theta_\ell}{\ell}r^\ell$.
\end{theorem}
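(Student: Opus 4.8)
The plan is to reduce the statement to the classical joint convergence of cycle counts under the generalized Ewens measure, and then to obtain that convergence by the method of joint factorial moments fed by the singularity analysis of $G_\Theta$. Let $\sigma\in S_n$ be the permutation underlying $A_n$ and write $C_\ell^{(n)}=C_\ell(\sigma)$. An index $i$ is fixed by $\sigma^\ell$ precisely when the $\sigma$-cycle through $i$ has length dividing $\ell$, which gives the deterministic identity, valid for all $n$ and all $1\le \ell\le n$,
\[
\Tr[A_n^\ell]\;=\;\#\{\,i:\sigma^\ell(i)=i\,\}\;=\;\sum_{d\mid \ell} d\,C_d^{(n)}.
\]
Hence $(\Tr[A_n],\dots,\Tr[A_n^k])$ is the image of $(C_1^{(n)},\dots,C_k^{(n)})$ under the fixed linear map $L_k$ with entries $(L_k)_{\ell d}=d\,\mathbbm{1}[d\mid \ell]$, which does not depend on $n$. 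By the continuous mapping theorem it therefore suffices to prove
\[
(C_1^{(n)},\dots,C_k^{(n)})\;\xrightarrow[n\to\infty]{\ \mathrm{law}\ }\;(Y_1,\dots,Y_k),
\]
with $(Y_\ell)_{1\le \ell\le k}$ independent and $Y_\ell\sim\mathrm{Poisson}\bigl(\tfrac{\theta_\ell}{\ell}r^\ell\bigr)$; applying $L_k$ to this limit yields exactly $X_\ell=\sum_{d\mid\ell}d\,Y_d$, as required.

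\emph{Factorial moments.} Since the limit $(Y_1,\dots,Y_k)$ is $\N^k$-valued with moment-determinate law (each Poisson marginal satisfies Carleman's condition), it is enough to check convergence of all joint falling-factorial moments. Writing $x^{\underline m}=x(x-1)\cdots(x-m+1)$, the exponential-formula structure of the generalized Ewens measure --- which I will recall in Section \ref{sec:conv_traces} after \cite{Nikeghbali_Zeindler,Hughes_Najnudel_Nikeghbali_Zeindler} --- gives, for any $m_1,\dots,m_k\ge 0$ and $n\ge J:=\sum_{\ell=1}^k \ell m_\ell$,
\[
\E\!\Bigl[\,\prod_{\ell=1}^k \bigl(C_\ell^{(n)}\bigr)^{\underline{m_\ell}}\,\Bigr]
\;=\;\prod_{\ell=1}^k\Bigl(\frac{\theta_\ell}{\ell}\Bigr)^{\!m_\ell}\,\frac{h_{\,n-J}^{\Theta}}{h_{\,n}^{\Theta}},
\]
where $h_m^\Theta=[z^m]\,G_\Theta(z)$ are the constants from \eqref{eq:def_gen_ewens}.

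\emph{Singularity analysis and conclusion.} Because $g_\Theta\in F(r,\gamma,K)$, the series $G_\Theta=\exp(g_\Theta)$ is analytic in a $\Delta$-domain at $z=r$, its only singularity on the circle $|z|=r$, with $G_\Theta(z)=e^{K}(1-z/r)^{-\gamma}\bigl(1+O(z-r)\bigr)$ as $z\to r$. Hwang's transfer theorems \cite{hwang1994} --- already invoked for Theorem \ref{th:tightness} --- then give $h_n^\Theta\sim c\,n^{\gamma-1}r^{-n}$ with $c>0$, hence $h_{n-J}^\Theta/h_n^\Theta\to r^{J}$ for each fixed $J$. Substituting,
\[
\E\!\Bigl[\,\prod_{\ell=1}^k \bigl(C_\ell^{(n)}\bigr)^{\underline{m_\ell}}\,\Bigr]\;\xrightarrow[n\to\infty]{}\;\prod_{\ell=1}^k\Bigl(\frac{\theta_\ell}{\ell}\,r^\ell\Bigr)^{\!m_\ell}\;=\;\prod_{\ell=1}^k\E\!\bigl[\,Y_\ell^{\underline{m_\ell}}\,\bigr],
\]
which is precisely the joint factorial-moment identity characterising independent Poisson variables with the stated parameters; this establishes the cycle-count convergence and hence the theorem.

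\emph{Main obstacle.} The trace identity, the continuous mapping step, and the exponential-formula expression for the factorial moments are all elementary and uniform in $n$; the genuine analytic input is the ratio asymptotics $h_{n-J}^\Theta/h_n^\Theta\to r^{J}$, which is where the logarithmic-class hypothesis $g_\Theta\in F(r,\gamma,K)$ is used and which is exactly the content of the results of \cite{Nikeghbali_Zeindler} and \cite{Hughes_Najnudel_Nikeghbali_Zeindler} that we quote. The only point requiring a line of care beyond quoting is upgrading convergence of joint factorial moments to convergence in law of the full vector, which follows by reducing to one-dimensional marginals via linear combinations and invoking Carleman's criterion.
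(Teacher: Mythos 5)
Your proposal is correct and reaches the theorem, but it takes a genuinely different route for the key intermediate step. The reduction is identical in spirit: you use the deterministic identity $\Tr[A_n^\ell]=\sum_{d\mid\ell}dC_d^{(n)}$ and the continuous mapping theorem to pass from cycle counts to traces, where the paper applies the same identity together with Cram\'er--Wold; these are interchangeable here. The difference lies in how the joint Poisson limit of $(C_1^{(n)},\dots,C_k^{(n)})$ is obtained. The paper quotes the generating-function identity \eqref{eq:generating_function_cycles} for the joint characteristic function of the cycle counts, whose right-hand side is $G(t)$ multiplied by an entire factor, and extracts the limit of $\E[\exp(i\sum_m s_mC_m^{(n)})]$ by Hwang's singularity analysis plus L\'evy continuity (this is Corollary 3.2 of \cite{Nikeghbali_Zeindler}). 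You instead compute exact joint falling-factorial moments, $\E[\prod_\ell(C_\ell^{(n)})^{\underline{m_\ell}}]=\prod_\ell(\theta_\ell/\ell)^{m_\ell}\,h_{n-J}^\Theta/h_n^\Theta$, which is a correct and classical consequence of the cycle-index structure, and then feed in the ratio asymptotics $h_{n-J}^\Theta/h_n^\Theta\to r^J$. Both arguments ultimately rest on the same analytic input (coefficient asymptotics of $\Delta$-analytic functions with a logarithmic singularity), so neither is more elementary; your version has the merit of making the single quantity that carries the hypothesis $g_\Theta\in F(r,\gamma,K)$ completely explicit, at the cost of the extra (but standard, and correctly flagged) step of upgrading factorial-moment convergence to convergence in law via moment determinacy. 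One caveat: the asymptotic you state, $h_n^\Theta\sim c\,n^{\gamma-1}r^{-n}$ with $c=e^K/\Gamma(\gamma)>0$, degenerates when $\gamma=0$, which Definition \ref{def:log_class} permits; in that boundary case the ratio limit $h_{n-J}^\Theta/h_n^\Theta\to r^J$ needs a separate justification (the paper sidesteps this by citing \cite{Nikeghbali_Zeindler} wholesale), so you should either exclude $\gamma=0$ or supply the corresponding transfer estimate.
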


\noindent
Thanks to Lemma \ref{lemma:conditions_a_b}, Theorem \ref{th:tightness} 
and Theorem \ref{th:conv_of_traces}, we derive that $p_n$ converges 
towards the random analytic function $F \in \mathcal{H}(\D)$ given by
\begin{equation*}
    F(z) = 1 + \sum_{k \geqslant 1} \frac{(-z)^k}{k!} 
    P_k(X_1, \dots, X_k) \ .
\end{equation*}

\noindent
To obtain the expression of Theorem \ref{th:conv}, we rely on Theorem 
\ref{th:poisson_expression}, proved in Section \ref{sec:poisson_expression}
which yields the desired expression and ends the proof of Theorem \ref{th:conv}.

\begin{theorem}[Poisson expression for $F$]
    \label{th:poisson_expression}
    Under the assumptions of Theorem \ref{th:conv}, one has 
    that for every $z \in \D$, almost surely, 
    \begin{equation}
        F(z) = \exp(-f(z))
    \end{equation}
    where 
    \begin{equation*}
        f(z) = \sum_{k \geqslant 1} \frac{X_k}{k} z^k , 
    \end{equation*}
    and where $(X_k)_{k \geqslant 1}$ are 
    defined as in Theorem \ref{th:conv}.
\end{theorem}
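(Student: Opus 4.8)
The plan is to recognize the universal polynomials $P_k$ as the ones governing Newton's identities and to transfer a formal power series identity into an identity of holomorphic functions on $\D$. The algebraic input is as follows. For any square matrix $A$, writing $\det(1+wA)=\exp(\Tr\log(1+wA))$ gives, as power series in $w$, $\det(1+wA)=\exp\big(\sum_{k\geqslant 1}\tfrac{(-1)^{k+1}}{k}w^k\Tr[A^k]\big)$; since the coefficient of $w^k$ on the left is $\Delta_k(A)=\tfrac1{k!}P_k(\Tr[A],\dots,\Tr[A^k])$, applying this to diagonal matrices of large size (for which the first $k$ traces are algebraically independent) yields the identity of formal power series in $w$ over $\mathbb Q[t_1,t_2,\dots]$
\[
    1+\sum_{k\geqslant 1}\frac{w^k}{k!}P_k(t_1,\dots,t_k)=\exp\Big(\sum_{k\geqslant 1}\frac{(-1)^{k+1}}{k}t_kw^k\Big),
\]
equivalently $\log\big(1+\sum_{k\geqslant 1}\tfrac{w^k}{k!}P_k(t_1,\dots,t_k)\big)=\sum_{k\geqslant 1}\tfrac{(-1)^{k+1}}{k}t_kw^k$ in $\mathbb Q[t_1,t_2,\dots][[w]]$.

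Next I would show that $F$ is almost surely zero-free on $\D$. Since the eigenvalues of $A_n$ lie on the unit circle, $p_n(z)=\det(1-zA_n)$ never vanishes on $\D$, and $p_n(0)=1$. The set of $g\in\mathcal H(\D)$ that are either identically zero or nowhere vanishing is closed (Hurwitz's theorem), so, as $p_n$ takes values in it almost surely and $p_n\to F$ in law in $\mathcal H(\D)$ by Lemma~\ref{lemma:conditions_a_b}, Theorem~\ref{th:tightness} and Theorem~\ref{th:conv_of_traces}, the limit $F$ takes values in it almost surely; since $F(0)=1$, the function $F$ is nowhere vanishing almost surely. Consequently the holomorphic branch of $\log F$ normalized by $\log F(0)=0$ is a well-defined random element of $\mathcal H(\D)$.

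Finally I would match Taylor coefficients. Substituting $t_k=X_k$ and $w=-z$ in the logarithmic form of the algebraic identity, the left-hand side is the Taylor series of the analytic function $\log F$ (the formal logarithm of the Taylor series of $F$ coincides with the Taylor series of $\log F$ because $F(0)=1$ and $F$ is zero-free), while the right-hand side becomes $\sum_{k\geqslant 1}\tfrac{(-1)^{k+1}}{k}X_k(-z)^k=-\sum_{k\geqslant 1}\tfrac{X_k}{k}z^k$. Hence $\log F=-f$ in $\mathcal H(\D)$; in particular the series $\sum_{k\geqslant 1}\tfrac{X_k}{k}z^k$ converges on all of $\D$ and defines $f\in\mathcal H(\D)$, and $F(z)=\exp(\log F(z))=\exp(-f(z))$ for each $z\in\D$, almost surely. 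Evaluating along a countable dense subset of $\D$ and using continuity of both sides then upgrades this to: almost surely, $F(z)=\exp(-f(z))$ for every $z\in\D$.

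The main obstacle is exactly the passage from the formal identity to an identity of genuine holomorphic functions, i.e.\ knowing in advance that $\log F$ (equivalently $f$) is holomorphic on $\D$ --- this is what the Hurwitz argument delivers, via the zero-freeness of every $p_n$ on $\D$. A more hands-on alternative that bypasses Hurwitz is to prove directly that $\limsup_k X_k^{1/k}\leqslant 1$ almost surely: one uses $\mathbb E[X_k]=\sum_{\ell\mid k}\theta_\ell r^\ell$ together with the fact that $\sum_{\ell\geqslant 1}\tfrac{\theta_\ell}{\ell}r^\ell w^\ell$ has radius of convergence $1$ (which follows from $g_\Theta$ having radius of convergence $r$, by Pringsheim's theorem applied to its nonnegative coefficients together with the logarithmic singularity at $r$ encoded in $g_\Theta\in F(r,\gamma,K)$), plus a Borel--Cantelli argument on the partial sums $\sum_{\ell\leqslant k}Y_\ell$; one then checks directly that $f\in\mathcal H(\D)$ and matches Taylor coefficients as above.
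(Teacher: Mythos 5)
Your proof is correct, but it reaches the conclusion by a genuinely different route at the one step that actually requires work. The paper first proves Proposition \ref{prop:convergence_radius} directly: it shows $\limsup_k X_k^{1/k}\leqslant 1$ almost surely by a probabilistic coupling of the $Y_\ell$ with the Poisson variables of parameters $(1+\epsilon)^\ell/\ell$ studied in Coste--Lambert--Zhu, so that $f\in\mathcal H(\D)$ is known in advance; the identity $F=\exp(-f)$ is then obtained by matching Taylor coefficients near the origin and invoking the identity theorem, since both sides are holomorphic on $\D$. You instead establish that $F$ is almost surely zero-free on $\D$ via Hurwitz's theorem combined with the portmanteau theorem (each $p_n$ is zero-free on $\D$ and the set of functions that are identically zero or nowhere vanishing is closed in $\mathcal H(\D)$), which makes $\log F$ a global holomorphic branch on $\D$; the convergence of $\sum_k \frac{X_k}{k}z^k$ on all of $\D$ then falls out for free, since it is the Taylor series at $0$ of a function holomorphic on the whole disk. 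Both arguments are sound. Your soft complex-analytic route is arguably cleaner and avoids the coupling estimate entirely, and you correctly identify the formal Newton-identity input $1+\sum_k \frac{w^k}{k!}P_k(t_1,\dots,t_k)=\exp\bigl(\sum_k \frac{(-1)^{k+1}}{k}t_k w^k\bigr)$ that the paper leaves implicit. The only thing your route does not deliver is the standalone statement of Proposition \ref{prop:convergence_radius} in the form the paper later reuses (the same $\ell_0$-truncation estimate reappears in the proof of Lemma \ref{lem:correlations} to control $\E[X_k]$), though your concluding remark sketches essentially that direct argument as an alternative.
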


\section{Tightness: proof of Theorem \ref{th:tightness}}
\label{sec:tightness}

The goal of this section is to prove Theorem \ref{th:tightness}.
We start by Lemma \ref{lem:red_unif_control} which reduces the tightness of 
a sequence of a functions $(f_n)$ to proving tightness of their local supremum.
This lemma corresponds to Proposition 2.5 of \cite{Shirai}.

\begin{lemma}[Uniform control]
    \label{lem:red_unif_control}
    Let $(f_n)_{n \geqslant 1}$ be  
    random variables in $\mathcal{H}(\D)$. 
    If for every compact $K \subset \D$, 
    the sequence 
    $\left(\sup_{z \in K} |f_n(z)| \right)_{n \geqslant 1}$ 
    is tight, then $(f_n)_{n \geqslant 1}$ is tight. \\
\end{lemma}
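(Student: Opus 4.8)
The plan is to exploit the fact that $\mathcal{H}(\D)$ is a Polish space --- a Fréchet space under the metric of local uniform convergence --- together with Montel's theorem, which describes its compact subsets. Recall that tightness of a sequence of random elements in a Polish space means that for every $\varepsilon > 0$ there is a compact set $C$ with $\prob(f_n \in C) \geqslant 1 - \varepsilon$ for all $n \geqslant 1$.

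First I would fix an exhaustion of $\D$ by compact sets, say $K_j = \{z \in \C : |z| \leqslant 1 - 1/(j+1)\}$, so that the topology of $\mathcal{H}(\D)$ is generated by the seminorms $f \mapsto \|f\|_{K_j} := \sup_{z \in K_j} |f(z)|$ and a sequence $(g_m)$ converges to $g$ iff $\|g_m - g\|_{K_j} \to 0$ for every $j$. Given $\varepsilon > 0$, I would apply the hypothesis to each $K_j$: there exists $M_j > 0$ such that $\prob(\|f_n\|_{K_j} > M_j) \leqslant \varepsilon \, 2^{-j}$ for all $n \geqslant 1$.

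Next I would set $C = \{ f \in \mathcal{H}(\D) : \|f\|_{K_j} \leqslant M_j \text{ for every } j \geqslant 1 \}$. This set is closed, since each constraint $\|f\|_{K_j} \leqslant M_j$ passes to local uniform limits, and it is uniformly bounded on every compact subset of $\D$; hence by Montel's theorem it is relatively compact, and being a closed subset of its (compact) closure it is itself compact. A union bound then gives, uniformly in $n$,
\[
\prob(f_n \notin C) \leqslant \sum_{j \geqslant 1} \prob\big(\|f_n\|_{K_j} > M_j\big) \leqslant \sum_{j \geqslant 1} \varepsilon \, 2^{-j} = \varepsilon,
\]
so that $\prob(f_n \in C) \geqslant 1 - \varepsilon$ for all $n$, which is exactly the required tightness.

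The only points needing care are the measurability of the random variables $\|f_n\|_{K_j}$ --- handled by observing that the supremum may be computed over a countable dense subset of $K_j$, by continuity --- and the appeal to Montel's theorem for the compactness of $C$. I do not expect a genuine obstacle here: the lemma is essentially a repackaging of the theory of normal families into the probabilistic language of tightness, and it is stated in this generality as Proposition 2.5 of \cite{Shirai}.
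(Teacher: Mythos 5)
Your argument is correct and complete: the exhaustion $K_j$, the choice of thresholds $M_j$ with error $\varepsilon 2^{-j}$, the union bound, and the identification of $C$ as a closed, locally uniformly bounded (hence, by Montel, compact) subset of $\mathcal{H}(\D)$ is exactly the standard proof of this fact. The paper itself does not prove the lemma but simply cites Proposition 2.5 of \cite{Shirai}, and your write-up is essentially the argument found there, with the measurability point correctly dispatched via continuity of $f \mapsto \sup_{K}|f|$ on $\mathcal{H}(\D)$.
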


\noindent
It therefore suffices to show that 
$(\sup_{z \in K} |f_n(z)|)_{n \geqslant 1}$ is tight for every compact $K \subset \D$. 
A sufficient criterion is to bound the second moment 
$ \left(\E \left[ \left(\sup_{z \in K} |f(z)| \right)^2 \right] \right)_{n \geqslant 1}$.
Using Lemma 2.6 in \cite{Shirai}, this is equivalent to prove that 
$\left(\sup_{z \in K} \E[|f_n(z)|^2] \right)_{n \geqslant 1}$ is bounded.
We will show this for the sequence $(p_n)_{n \geqslant 1}$ of 
characteristic polynomials by giving 
a uniform control of the second moment of $p_n$ in 
Proposition \ref{prop:second_order_moment}. 
This control relies on the same asymptotic as in Corollary 3.8 of 
\cite{Hughes_Najnudel_Nikeghbali_Zeindler}, where we explicit 
the fact that the error is uniform for $z$ in
compact subsets of $\D$. \\
\\
Recall that the functions $g$ and $G$ are defined in \eqref{eq:g_and_G} by 
$g= \sum_{k \geqslant 1} \frac{\theta_k}{k}z^k \text{ and } G(z) = \exp(g(z))$ 
for $|z| < r$ where $r$ is the convergence radius of $g$. 
For $\delta > 0$, let us denote the open disk of radius $\delta$ by 
\begin{equation*}
    \D_\delta = \{ z \in \C \mid |z| < \delta \} .
\end{equation*}

\begin{proposition}[Second moment control]
    \label{prop:second_order_moment}
    Assume that $g \in F(r, \gamma, K)$ for some 
    $r > 0$, $\gamma \geqslant 0$ and $K \in \C$. 
    For $\delta \in (0, 1)$ and $z \in \D_\delta$, 
    one has the asymptotic expansion
    \begin{equation}
        \label{eq:asyp_sec_moment}
        \E [|p_n(z)|^2] = \frac{G(r|z|^2)}{G(rz) G(r\overline{z})} 
        + O \left(\frac{1}{n} \right) ,
    \end{equation}
    where the $O$ term holds uniformly in $z \in \D_\delta$.
\end{proposition}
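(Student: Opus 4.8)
The plan is to reduce the computation of $\E[|p_n(z)|^2]$ to a ratio of Taylor coefficients of explicit generating functions and then to run singularity analysis while tracking uniformity in $z$. Since $p_n(z) = \prod_{k=1}^n (1-z^k)^{C_k^{(n)}}$, we have $|p_n(z)|^2 = \prod_{k\geqslant1}\big((1-z^k)(1-\overline z^k)\big)^{C_k^{(n)}}$, a multiplicative functional of the cycle counts. Under the generalized Ewens measure one has (writing $[w^n]$ for coefficient extraction) the weighted cycle-index identity
\[
\E_n^\Theta\!\Big[\prod_{k\geqslant1} t_k^{C_k}\Big] \;=\; \frac{[w^n]\exp\!\big(\sum_{k\geqslant1}\tfrac{\theta_k t_k}{k}w^k\big)}{[w^n]\,G(w)},
\]
which follows from $n!\,h_n^\Theta = [w^n]G(w)$ together with the exponential formula for the number of permutations of prescribed cycle type. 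Applying this with $t_k = (1-z^k)(1-\overline z^k) = 1 - z^k - \overline z^k + |z|^{2k}$ and $g = \sum_{k\geqslant1}\tfrac{\theta_k}{k}w^k$, the argument of the exponential in the numerator telescopes to $g(w) - g(zw) - g(\overline z w) + g(|z|^2 w)$, so that
\[
\E_n^\Theta[|p_n(z)|^2] \;=\; \frac{[w^n]\, H_z(w)}{[w^n]\,G(w)}, \qquad H_z(w) := \frac{G(w)\,G(|z|^2 w)}{G(zw)\,G(\overline z w)} .
\]
Evaluating the ``target factor'' at the dominant singularity $w=r$ is exactly $G(r|z|^2)/(G(rz)G(r\overline z))$, which is the claimed limit.

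Next I would analyze numerator and denominator simultaneously. Write $H_z(w) = G(w)\,\Phi_z(w)$ with $\Phi_z(w) = \exp\big(g(|z|^2 w) - g(zw) - g(\overline z w)\big)$. Because $\delta>0$, for $z\in\D_\delta$ the points $zw,\overline z w,|z|^2 w$ remain strictly inside the disk of radius $r$ for all $w$ in a fixed disk $\{|w|\leqslant\rho\}$ with $r<\rho<R$; since $g$ and $g'$ are bounded on the corresponding compact subset of $\{|u|<r\}$ and $G=\exp(g)$ never vanishes, $\Phi_z$ and $\Phi_z'$ are holomorphic and bounded on $\{|w|\leqslant\rho\}$, uniformly in $z\in\D_\delta$. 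The hypothesis $g\in F(r,\gamma,K)$ gives that $G$ has a unique dominant singularity at $w=r$ with $G(w) = (1-w/r)^{-\gamma}\,\ed^{K}\,(1+O(w-r))$, so $H_z$ is holomorphic in a Delta-domain common to all $z\in\D_\delta$ and
\[
H_z(w) \;=\; (1-w/r)^{-\gamma}\,\ed^{K}\,\Phi_z(r)\,\big(1+O(w-r)\big),
\]
the implied constant uniform in $z$. Hwang's transfer theorems \cite{hwang1994} then give $[w^n]G(w) = \tfrac{\ed^{K}}{\Gamma(\gamma)} n^{\gamma-1} r^{-n}(1+O(1/n))$ and $[w^n]H_z(w) = \tfrac{\ed^{K}\Phi_z(r)}{\Gamma(\gamma)} n^{\gamma-1} r^{-n}(1+O(1/n))$, the latter uniformly in $z\in\D_\delta$. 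Dividing, and noting that $\Phi_z(r) = G(r|z|^2)/(G(rz)G(r\overline z))$ is bounded uniformly on $\D_\delta$, turns the multiplicative $1+O(1/n)$ into the additive error in \eqref{eq:asyp_sec_moment}.

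The only genuine difficulty is the uniformity in $z$ of the error, i.e.\ making effective the asymptotic of Corollary~3.8 in \cite{Hughes_Najnudel_Nikeghbali_Zeindler}. This needs: (i) one Delta-domain valid for every $z\in\D_\delta$, which works because the sole singularity of $H_z$ there comes from the $z$-independent factor $G(w)$; (ii) a bound on $\sup|H_z|$ over that Delta-domain independent of $z$, which follows from the uniform boundedness of $\Phi_z$; and (iii) uniform-in-$z$ control of the remainder $H_z(w) - \ed^{K}\Phi_z(r)(1-w/r)^{-\gamma}$, which follows from $\Phi_z(w)=\Phi_z(r)+O(w-r)$ uniformly (using the uniform bound on $\Phi_z'$ near $r$). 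None of these is deep, but verifying that the constants in the standard singularity-analysis estimates can be chosen uniformly over the compact family $\{\Phi_z : z\in\D_\delta\}$ is where the bookkeeping lies; I would isolate it as a ``transfer theorem with parameters'' and specialize.
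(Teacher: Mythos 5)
Your proposal is correct and follows essentially the same route as the paper: the same generating-function identity $\sum_n h_n^\Theta \E[|p_n(z)|^2] t^n = G(t)S_z(t)$ with $S_z(t)=G(t|z|^2)/(G(tz)G(t\overline z))$ (which you rederive from the cycle index instead of citing Corollary 3.6 of Hughes--Najnudel--Nikeghbali--Zeindler), followed by Hwang's singularity analysis applied on a Delta-domain common to all $z\in\D_\delta$, with uniformity of the $O(1/n)$ obtained exactly as in the paper from uniform bounds on $S_z$ and $S_z'$ near $t=r$. The only slip is the normalization in your intermediate justification: one has $h_n^\Theta=[w^n]G(w)$, not $n!\,h_n^\Theta=[w^n]G(w)$ (your displayed cycle-index identity is nonetheless correct).
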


\begin{proof}
    Let us fix $\delta \in (0,1)$. For $z \in \D_\delta$, one has using Corollary 3.6 of 
    \cite{Hughes_Najnudel_Nikeghbali_Zeindler}
    \begin{equation}
        \label{eq:generating_hwang}
        \sum_{n \geqslant 0} t^n h_n^\Theta \E[|p_n(z)|^2] = \exp(g(t)) S_z(t)
    \end{equation}
    where $h_n^\Theta$ are the coefficients in \eqref{eq:def_gen_ewens} and
    \begin{equation}
        \label{eq:def_S_n_z}
        S_z(t) =  \frac{G(t|z|^2)}{G(tz) G(t\overline{z})} \ .
    \end{equation}
    For every $z \in \D_\delta$, the function $t \mapsto G(zt)$ is analytic for 
    $|t| \leqslant r+\epsilon_1$ for some $\epsilon_1 > 0$ such that $\delta(r+\epsilon_1) < r$ 
    since $G(u) = \exp(g(u))$ and since that $g$ is analytic in $\D_r$. 
    Therefore, for every $z \in \D$, $t \mapsto S_z(t)$ is analytic for $|t| \leqslant r+\epsilon_1$.
    \\
    \\
    By assumption, $g$ is analytic for $t \in \Delta(r, r+\epsilon_2, \phi)$ for 
    some $\epsilon_2 > 0$ and $0 < \phi < \frac{\pi}{2}$. 
    Set $R = r + \min(\epsilon_1, \epsilon_2)$ and $\xi > 0$ 
    such that $r\ed^\xi < R$.
    The key idea, as done in Section 5.3.2 of \cite{hwang1994}, is to 
    compute asymptotic of $\E [|p_n(z)|^2]$ using \eqref{eq:generating_hwang}. 
    Let us define contours 
    \begin{align*}
        \Gamma &= \{ t \in \C \mid |t-1| = r (\ed^\xi-1), |\arg(t-r)| \geqslant \phi \}, \\
        \Gamma' &= \{ t \in \C \mid |t| = r \ed^\xi, |\arg(t-r)| \geqslant \phi \},
    \end{align*}
    oriented as in Figure \ref{fig:contours}.
    \begin{figure}[h]
        \centering
        \includegraphics[scale=1]{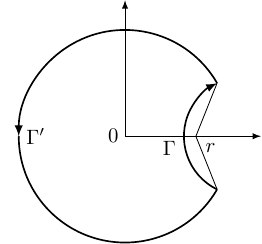}
        \caption{Integration contours $\Gamma$ and $\Gamma'$.}
        \label{fig:contours}
    \end{figure}
    We have
    \begin{equation*}
        h_n^\Theta \E[|p_n(z)|^2] = \frac{1}{2i \pi} \int_\Gamma \frac{ \exp(g(t)) S_z(t)}{t^{n+1}} dt 
        + \frac{1}{2i \pi} \int_{\Gamma'} \frac{ \exp(g(t)) S_z(t)}{t^{n+1}} dt \ .
    \end{equation*}
    For the integral over $\Gamma'$, we may use \eqref{eq:def_S_n_z} to derive the bound 
    \begin{equation}
        \label{eq:unif_bound_S}
        |S_z(t)| \leqslant \frac{\sup_{|u| \leqslant \delta^2r \ed^\xi} |G(u)|}
        {(\inf_{|u| \leqslant \delta r \ed^\xi} |G(u)|)^2} = C \ ,
    \end{equation}
    where $C$ does not depend on $z$.  The contribution of this integral 
    is thus $O(r^{-n}\ed^{-n\xi})$, where the $O$ term is uniform in $z$. 
    Let us consider the integral over $\Gamma$. Apply the change of 
    variables $t = r \ed^{- u}$, mapping $\Gamma$ to $\Gamma_0$, so that 
    \begin{equation*}
        \int_\Gamma \frac{ \exp(g(t)) S_z(t)}{t^{n+1}} dt 
        = r^{-n} \int_{\Gamma_0} \ed^{nu} \exp(g(r \ed^{-u})) S_z(r \ed^{-u}) du 
        = r^{-n} \ed^{K} \int_{\Gamma_0} u^{-\gamma} \ed^{nu} U_z(u) du
    \end{equation*}
    where 
    \begin{equation*}
        U_z(u) = \left( \frac{u}{1 - \ed^{-u}} \right)^\gamma 
        \ed^{ H(1 - \ed^{-u}) - K} S_z(r \ed^{-u}) \ , 
    \end{equation*}
    and where $H$ is the function defined by 
    \begin{equation*}
        g(r \ed^{-u}) = - \gamma \log \left( 1 - \ed^{-u} \right) 
        + H \left(1 - \ed^{-u} \right) ,
    \end{equation*}
    so that $H \left(1 - \ed^{-u} \right) = K + O \left(r(\ed^{-u} - 1) \right)$ since $g \in F(r, \gamma, K)$.
    Using the asymptotic development of $g$, one has 
    \begin{equation*}
        U_z(u) = (1 + O(|u|)) S_z(r \ed^{-u})
    \end{equation*}
    where the $O(|u|)$ is independent of $z$ as it only depends on $g$. 
    Moreover, 
    \begin{equation*}
        S_z(r\ed^{-u}) = S_z(r) + r(\ed^{-u} - 1) \cdot O(1) \ ,
    \end{equation*}
    where the $O(1)$ term is independent of $z$ as it can be 
    taken as as $ \sup_{|u| \leqslant r} |S_z'(u)|$, 
    which can be bounded uniformly with respect to $z$ by bounding values of $G$ 
    and $G'$ in $\D_{r \delta}$ in a similar fashion as in \eqref{eq:unif_bound_S}. 
    We derive that $U_z(u) = S_z(r) + O(|u|)$ uniformly in $z \in \D_\delta$. 
    \\
    \\
    One can therefore write the remaining integral as 
    \begin{equation*}
        \int_\Gamma \frac{ \exp(g(t)) S_z(t)}{t^{n+1}} dt = 
        r^{-n} \ed^{K} S_z(r) \int_{\Gamma_0} u^{-\gamma} \ed^{nu} du 
        + R(n) \ ,
    \end{equation*}
    where $R(n) = O \left( r^{-n} \int_{\Gamma_0} |u|^{1-\gamma} \ed^{n \Re u} |du|  \right)$ 
    and where the constant does not depend on $z \in \D_\delta$.
    Asymptotics of $R(n)$ and $ \int_{\Gamma_0} u^{-\gamma} \ed^{nu} du$ 
    are computed 
    in \cite{hwang1994}, see Section 5.3.2, eq. (5.7) therein, which yields 
    \begin{equation*}
         h_n^\Theta \E[|p_n(z)|^2] = \frac{\ed^K n^{\gamma - 1}}{r^n \Gamma(\gamma)} S_z(r) 
         + O \left(r^{-n} n^{\gamma - 2}\right) \ .
    \end{equation*}
    Using that $h_n^\Theta = \frac{\ed^K n^{\gamma -1}}{r^n \Gamma(\gamma)} 
    (1 + O \left( \frac{1}{n} \right))$, given by Lemma 2.13 of 
    \cite{Nikeghbali_Zeindler}, we obtain the asymptotic
    \begin{equation*}
        \E[|p_n(z)|^2] = S_z(r) + O \left( \frac{1}{n} \right)
    \end{equation*}
    with an error term uniform in $z \in \D_\delta$ as desired.
\end{proof}

\noindent
Proposition \ref{prop:second_order_moment}, 
implies that $\left(\sup_{z \in K} \E[|f_n(z)|^2] \right)_{n \geqslant 1}$ is bounded 
for every compact $K \subset \D$, so that the sequence $(p_n)_{n \geqslant 1}$ 
is tight thanks to Lemma \ref{lem:red_unif_control} yielding Theorem \ref{th:tightness}.

\section{Convergence of traces: proof of Theorem \ref{th:conv_of_traces}}
\label{sec:conv_traces}

The purpose of this section is to prove Theorem \ref{th:conv_of_traces} on 
the finite dimensional convergence for traces of monomials $A_n^1, \dots, A_n^k$.
The study of the convergence of traces for random permutation matrices 
following the generalized Ewens distribution has been done in \cite{Nikeghbali_Zeindler}. 
From Corollary 3.2 therein, we have that for every $k \geqslant 1$, 
the finite dimensional convergence in law
\begin{equation}
    \label{eq:finite_dim_cv_cycles}
    \left( C_1^{(n)}, \dots, C_k^{(n)} \right) 
    \overunderset{\diff}{n \to \infty}{\longrightarrow} (Y_1, \dots, Y_k)
\end{equation}
holds with $(Y_\ell)_{\ell \geqslant 1}$ independent Poisson random variables 
with parameter $\frac{\theta_\ell}{\ell}r^\ell$.
Using 
\begin{equation*}
    \Tr \left[ A_n^k \right] = \sum_{\ell | k} \ell C_\ell^{(n)}
\end{equation*}
yields the result of Theorem \ref{th:conv_of_traces} by the Cramer-Wold theorem.
\\
\\
Note that the finite dimensional convergence \eqref{eq:finite_dim_cv_cycles} is a consequence of the 
functional equality on generating functions stated as \eqref{eq:generating_function_cycles} 
below which is Theorem 3.1 of \cite{Nikeghbali_Zeindler}:
\begin{equation}
    \label{eq:generating_function_cycles}
    \sum_{n \geqslant 0} h_n^\Theta \E \left[\exp\left( i \sum_{m=1}^{b} s_m C_m^{(n)} \right) \right] t^n 
    = \exp\left( \sum_{m=1}^{b} \frac{\theta_m}{m} 
    \left(\ed^{is_m}-1 \right)t^m \right) G(t) \ .
\end{equation}

\section{Poisson Expression: proof of Theorem \ref{th:poisson_expression}}
\label{sec:poisson_expression}

Recall that $g(z) = \sum_{k \geqslant 1} \frac{\theta_k}{k} z^k$. 
Since $g \in F(r, \gamma, K)$ for some $r > 0$, $\gamma > 0$ and $K \in \C$, 
then $r$ is the convergence radius of $g$, so that 
$\frac{1}{r} = \limsup_k \theta_k^{\frac{1}{k}}$. 
Moreover, since 
\begin{equation*}
    g(z) = - \gamma \log \left( 1 - \frac{z}{r} \right) + K + O(z-r) \ ,
\end{equation*}
one has that $\sum_{k \geqslant 1} \frac{r^k \theta_k}{k} = + \infty$. Indeed, 
for otherwise, by Abel's theorem $g$ would be left continuous at $r$ 
with left limit $\sum_{k \geqslant 1} \frac{r^k \theta_k}{k}$ and not have the singular behaviour above.
Let $(Y_\ell)_{\ell \geqslant 1}$ be a family of 
independent Poisson random variables with parameters 
$ \left(\frac{r^\ell \theta_\ell}{\ell} \right)_{\ell \geqslant 1}$.
Consider the series 
\begin{equation*}
  f(z) = \sum_{k \geqslant 1} \frac{X_k}{k} z^k, \text{ where } 
  X_k = \sum_{\ell | k} \ell Y_\ell \ .
\end{equation*}  
We first show that $f$ is a well-defined function on the open 
disk $\D$ in Proposition \ref{prop:convergence_radius}. 

\begin{proposition}[Radius of convergence for limit function]
    \label{prop:convergence_radius}
    Assume that $g \in F(r, \gamma, K)$ for $r > 0$, 
    $\gamma > 0$ and $K \in \C$.
    Almost surely, the radius of convergence of $f$ is equal to $1$.
\end{proposition}

\begin{proof}
    Let us denote by $r_f$ the convergence radius of $f$. Then, almost surely, 
    \begin{equation*}
        \frac{1}{r_f} =  \limsup_{k \to \infty} \left( \frac{X_k}{k} 
        \right)^{\frac{1}{k}} = \limsup_{k \to \infty} X_k^{\frac{1}{k}}. 
    \end{equation*}
    \begin{itemize}
        \item Let us first show that $r_f \leqslant 1$ almost surely. 
    If the sequence $\left(\frac{r^\ell \theta_\ell}{\ell} \right)_{\ell \geqslant 1}$
    is bounded from above by a constant $C > 0$, then 
    \begin{equation*}
        \prob \left[ Y_\ell = 1 \right] = \frac{r^\ell \theta_\ell}{\ell} 
        \ed^{-\frac{r^\ell \theta_\ell}{\ell}} \geqslant \frac{r^\ell \theta_\ell}{\ell} \ed^{-C}
    \end{equation*}
    so that $\sum_{\ell \geqslant 1} \prob \left[ Y_\ell = 1 \right] = +\infty$ since 
     $\sum_{k \geqslant 1} \frac{r^k \theta_k}{k} = + \infty$. 
    If the sequence is unbounded, for every $j \geqslant 1$, there exists $\ell(j) \geqslant 1$ 
    such that $\frac{r^{\ell(j)} \theta_{\ell(j)}}{\ell(j)} \geqslant j$. Thus, for $j \geqslant 1$,
        \begin{equation*}
        \prob \left[ Y_{\ell(j)} \geqslant 1 \right] 
        = 1 - \ed^{-\frac{r^{\ell(j)} \theta_{\ell(j)}}{\ell(j)}}
         \geqslant 1 - \ed^{-j} .
    \end{equation*}
    In both cases, the lower bound is not summable. 
    Borel-Cantelli's second lemma implies that almost surely, 
    there are infinitely many indices $k \geqslant 1$ such that $Y_{k} \geqslant 1$ 
    and thus $X_{k} \geqslant 1$. Therefore, almost surely $\limsup_{k} X_k^{\frac{1}{k}} \geqslant 1$
    which implies that almost surely $r_f \leqslant 1$ by Hadamard's formula. \\
    \item Let us now show that $r_f \geqslant 1$ almost surely.
    Let $\epsilon > 0$. Since $r$ is the convergence radius of $g$, 
    $\frac{1}{r} = \displaystyle \limsup_{\ell \to \infty} \theta_\ell^{1/\ell}$. Thus,
    there exists $\ell_0$ such that for $\ell \geqslant \ell_0$, 
    \begin{equation*}
        \left| \frac{1}{r} - \sup_{\ell \geqslant \ell_0} \theta_\ell^{\frac{1}{\ell}} \right| 
            \leqslant \frac{\epsilon}{r}
    \end{equation*}
    so that for $\ell \geqslant \ell_0$, 
    \begin{equation*}
        r^\ell \theta_\ell \leqslant (1 + \epsilon)^\ell.
    \end{equation*}
    Define on the same probability space sequences $(Y_\ell)_{\ell \geqslant 1}$ and 
    $(Y'_\ell)_{\ell \geqslant 1}$ having respective parameters 
    $(\frac{r^\ell \theta_\ell}{\ell})_{\ell \geqslant 1}$
    and $ \left(\frac{(1+ \epsilon)^\ell}{\ell} \right)_{\ell \geqslant 1}$,
    such that $Y_\ell \leqslant Y'_\ell$ almost surely for $\ell \geqslant \ell_0$.
    Let us define $(X'_k)_{k \geqslant 1}$ by $X'_k = \sum_{\ell | k} \ell Y'_\ell$. 
    Then, almost surely,
    \begin{equation*}
        X_k \leqslant X'_k + 
        \sum_{\substack{\ell | k \\ \ell \leqslant \ell_0}} \ell(Y_\ell - Y'_\ell) 
        \leqslant X'_k + \chi \ ,
    \end{equation*}
    where $\chi = \sum_{\ell = 1}^{\ell_0} \ell |Y_\ell - Y'_\ell|$, 
    which does not depend on $k$ and which is finite almost surely.
    Therefore, almost surely,
    \begin{equation*}
        \limsup_{k \to \infty} X_k^{\frac{1}{k}} \leqslant 
        \limsup_{k \to \infty} \left( X'_k + \chi \right)^{\frac{1}{k}} 
        \leqslant \limsup_{k \to \infty} \ (X'_k)^{\frac{1}{k}} = 1+\epsilon\ ,
    \end{equation*}
    where we have used that $X'_k + \chi \leqslant X'_k (1 + \chi)$ for the second inequality and that 
    the convergence radius of $\sum_{k \geqslant 1} \frac{X'_k}{k} z^k$ is almost surely 
    $\frac{1}{1+\epsilon}$ using Theorem 2.7, (iii) of \cite{Coste_Bernoulli} 
    for the last equality.
    Therefore, for every $\epsilon > 0$, almost surely, 
    \begin{equation*}
        r_f \geqslant \frac{1}{1+\epsilon} \ .
    \end{equation*} 
    Since $\epsilon > 0$ was arbitrary, we derive that $r_f \geqslant 1$ almost surely.
    \end{itemize}
\end{proof}

\noindent
Since $F(0) = 1$ and that $F \in \mathcal{H}(\D)$, one can consider $\log(F)$ which 
is a well-defined analytic function in a neighborhood of the origin, where $\log$ is the 
principal branch of the logarithm. This 
function coincides with $-f$ so that they are both equal.  
Both functions are well-defined in the unit disk from which one derives the 
desired expression of Theorem \ref{th:poisson_expression}.

\section{Poisson Multiplicative Function}

For the sake of completeness, we provide another representation 
for the limit function of Theorem \ref{th:conv}. 
This representation given in Lemma \ref{lem:infinite_product} 
has the form of an infinite product and 
is inspired from \cite{Coste_Bernoulli} where the exponential 
of a Poisson series appeared in the context of Bernoulli matrices.

\begin{lemma}[Infinite product expression]
    \label{lem:infinite_product}
    Under the assuptions of Theorem \ref{th:conv}, for $z \in \D$, one has 
    \begin{equation}
        \exp(-f(z)) = \prod_{k \geqslant 1} (1- z^k)^{Y_k}.
    \end{equation}
\end{lemma}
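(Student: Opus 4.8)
The plan is to manipulate the formal series defining $f$ and recognize it as (minus) the logarithm of the claimed product, using the relation $X_k = \sum_{\ell \mid k} \ell Y_\ell$ and the standard power-series identity $-\log(1-z^\ell) = \sum_{j \geqslant 1} \frac{z^{\ell j}}{j}$. First I would substitute the expression for $X_k$ into $f$:
\[
f(z) = \sum_{k \geqslant 1} \frac{z^k}{k} X_k = \sum_{k \geqslant 1} \frac{z^k}{k} \sum_{\ell \mid k} \ell Y_\ell.
\]
Then I would reindex the double sum by writing $k = \ell j$ with $j \geqslant 1$, so that
\[
f(z) = \sum_{\ell \geqslant 1} \ell Y_\ell \sum_{j \geqslant 1} \frac{z^{\ell j}}{\ell j} = \sum_{\ell \geqslant 1} Y_\ell \sum_{j \geqslant 1} \frac{z^{\ell j}}{j} = - \sum_{\ell \geqslant 1} Y_\ell \log(1 - z^\ell),
\]
which gives $\exp(-f(z)) = \prod_{\ell \geqslant 1} (1 - z^\ell)^{Y_\ell}$ after exponentiating, i.e. the claim with $k$ in place of $\ell$ as the dummy index.

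The main thing to justify carefully is that all these rearrangements are legitimate for $z \in \D$, and that the infinite product converges. Here I would invoke Proposition \ref{prop:convergence_radius}: almost surely $f$ has radius of convergence at least $1$, so the series $\sum_k \frac{X_k}{k} z^k$ converges absolutely and locally uniformly on $\D$; since $X_k \geqslant 0$ and $\ell Y_\ell \leqslant X_{\ell j}$ for every divisor, absolute convergence of the double sum $\sum_{\ell, j} Y_\ell \frac{|z|^{\ell j}}{j}$ follows by comparison (each term $Y_\ell \frac{|z|^{\ell j}}{j} \leqslant \frac{X_{\ell j}}{\ell j}|z|^{\ell j} \cdot \ell \leqslant X_{\ell j}\frac{|z|^{\ell j}}{j}$, and summing over the lattice of pairs reproduces $\sum_k \frac{X_k}{k}|z|^k$ up to the bounded divisor-count multiplicity — more cleanly, $\sum_{\ell,j} \ell Y_\ell \frac{|z|^{\ell j}}{\ell j} = \sum_k \frac{|z|^k}{k}\sum_{\ell\mid k}\ell Y_\ell = f(|z|) < \infty$). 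Fubini for nonnegative terms then permits the reindexing without any conditional-convergence subtleties, and the partial products $\prod_{\ell \leqslant N}(1-z^\ell)^{Y_\ell}$ converge because $\sum_\ell Y_\ell |z|^\ell < \infty$ (again by comparison with $f(|z|)$), so no factor $1-z^\ell$ vanishes for $|z|<1$ and $\log$ of the product is the convergent sum $\sum_\ell Y_\ell \log(1-z^\ell)$.

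I do not expect a serious obstacle: the identity is essentially a Dirichlet-convolution bookkeeping exercise combined with the series for $\log(1-w)$, and the only real content is the a.s. convergence, which Proposition \ref{prop:convergence_radius} already supplies. If one wants to be scrupulous, one should also note that $f(z)$ as defined in Theorem \ref{th:poisson_expression} and here is the same random analytic function, and that the equality of the two analytic functions on $\D$ follows from their equality as power series (both sides being genuine elements of $\mathcal{H}(\D)$ on the almost-sure event of Proposition \ref{prop:convergence_radius}). This completes the proof.
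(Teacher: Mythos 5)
Your proof is correct and follows essentially the same route as the paper: substitute $X_k=\sum_{\ell\mid k}\ell Y_\ell$, reindex the double sum via $k=\ell j$ to recognize $-\sum_\ell Y_\ell\log(1-z^\ell)$, and exponentiate. The paper's own proof is exactly this inversion, justified by a one-line appeal to uniform convergence on $\D$; your Tonelli argument via nonnegativity and Proposition \ref{prop:convergence_radius} just makes that justification explicit.
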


\begin{proof}
    The expression above is due to the inversion
    \begin{equation*}
        \sum_{k \geqslant 1} \frac{X_k}{k} z^k 
        = \sum_{\ell \geqslant 1} \ell Y_\ell \sum_{k \geqslant 1} \frac{z^{k\ell}}{k\ell} 
        = -\sum_{\ell \geqslant 1} Y_\ell \log(1 - z^\ell) \ ,
    \end{equation*}
    which can be performed since uniform convergence holds for $z \in \D$. 
\end{proof}

\noindent
As introduced in \cite{Coste_Lambert_Zhu}, 
the expression of $F = \exp(-f)$ is the Poisson analog of the Gaussian holomorphic chaos 
which was first introduced in \cite{Najnudel_Paquette_Simm} 
for the study of the characteristic polynomial of matrices 
from Circular-$\beta$ Ensembles and their coefficients. The Gaussian holomorphic chaos also 
appeared in limit expressions for characteristic polynomials of i.i.d.\ matrices 
\cite{Bordenave_Chafai_Garcia} and 
Gaussian elliptic matrices \cite{Francois_Garcia}. 
It is the Gaussian analog of $F$, replacing Poisson random variables by complex Gaussians. 
This provides an 
example of log-correlated field as correlations for such function $r$ are given 
by $\E[r(z)\overline{r(w)}] = - \log(1-z\overline{w})$.
For the generalized Ewens measure, the correlations are given by the generating 
function $g$ as stated in Lemma \ref{lem:correlations}. 

\begin{lemma}[Correlations of Poisson field]
    \label{lem:correlations}
    For $z, w \in \D$, one has 
    \begin{equation}
        \label{eq:correlation_poisson}
        Cov(f(z), f(w)) = \sum_{a, b \geqslant 1} 
        \frac{1}{ab} g \left(rz^a \overline{w}^b \right) \ .
    \end{equation}
\end{lemma}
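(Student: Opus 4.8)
The plan is to compute the covariance directly from the series expansion $f(z) = \sum_{k \geqslant 1} \frac{X_k}{k} z^k$, where $X_k = \sum_{\ell \mid k} \ell Y_\ell$ with $(Y_\ell)_{\ell \geqslant 1}$ independent Poisson random variables of parameter $\frac{\theta_\ell}{\ell} r^\ell$. First I would write, for $z, w \in \D$,
\begin{equation*}
    \mathrm{Cov}(f(z), f(w)) = \sum_{k, k' \geqslant 1} \frac{z^k w^{k'}}{k k'} \mathrm{Cov}(X_k, X_{k'}),
\end{equation*}
which is legitimate because the double series converges absolutely: $\E[X_k]$ and $\mathrm{Var}(X_k)$ grow subexponentially by the computation in Proposition \ref{prop:convergence_radius}, and $|z|, |w| < 1$. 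Since the $Y_\ell$ are independent, $\mathrm{Cov}(Y_\ell, Y_{\ell'}) = \mathbbm{1}_{\ell = \ell'} \cdot \frac{\theta_\ell}{\ell} r^\ell$, so
\begin{equation*}
    \mathrm{Cov}(X_k, X_{k'}) = \sum_{\ell \mid k,\ \ell \mid k'} \ell^2 \cdot \frac{\theta_\ell}{\ell} r^\ell = \sum_{\ell \mid \gcd(k, k')} \ell\, \theta_\ell\, r^\ell.
\end{equation*}

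Next I would substitute this into the double sum and exchange the order of summation, letting $\ell$ run over all positive integers and writing $k = a\ell$, $k' = b\ell$ for $a, b \geqslant 1$. This gives
\begin{equation*}
    \mathrm{Cov}(f(z), f(w)) = \sum_{\ell \geqslant 1} \ell\, \theta_\ell\, r^\ell \sum_{a, b \geqslant 1} \frac{z^{a\ell} w^{b\ell}}{a \ell\, b \ell} = \sum_{a, b \geqslant 1} \frac{1}{ab} \sum_{\ell \geqslant 1} \frac{\theta_\ell}{\ell} (r z^a w^b)^\ell,
\end{equation*}
and recognizing the inner sum as $g(r z^a w^b)$ by the definition \eqref{eq:g_and_G} of $g$ completes the identity. (If the statement is read with $\overline{w}$ in place of $w$, i.e. $\mathrm{Cov}(f(z), \overline{f(w)})$, the same computation applies verbatim with $w$ replaced by $\overline{w}$, yielding the displayed formula \eqref{eq:correlation_poisson}.)

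The only genuine point requiring care — and the main obstacle — is justifying the interchange of summation order, in particular checking that $g(r z^a w^b)$ is well-defined for all $a, b \geqslant 1$ and that $\sum_{a,b} \frac{1}{ab} |g(r z^a w^b)|$ converges. Here one uses that $|r z^a w^b| \leqslant r |z w| < r$ for $a, b \geqslant 1$ with $z, w \in \D$, so each argument lies strictly inside the disk of convergence of $g$; moreover $|g(r z^a w^b)| = O(|z|^a |w|^b)$ uniformly, since $g(u) = O(|u|)$ near $0$ and $|r z^a w^b| \to 0$ as $a + b \to \infty$, which makes the $(a,b)$-sum dominated by a convergent geometric-type series. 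With absolute convergence in hand, Fubini's theorem legitimizes all rearrangements and the proof concludes.
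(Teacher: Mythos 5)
Your proposal is correct and follows essentially the same route as the paper: expand the covariance as a double series in the coefficients, use independence of the $(Y_\ell)$ to reduce $\mathrm{Cov}(X_k,X_{k'})$ to a sum over common divisors, reindex via $k=a\ell$, $k'=b\ell$, and identify $g$; your justification of well-definedness and of the summation interchange matches (and slightly sharpens) the paper's preliminary check on the radius of convergence of $\sum_k \E[X_k]z^k/k$. Your parenthetical about reading the covariance as $\E[(f(z)-\E f(z))\overline{(f(w)-\E f(w))}]$ correctly resolves the $w$ versus $\overline{w}$ convention that the paper uses implicitly.
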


\begin{proof}
    Since we want to compute correlations, we must consider the series 
    \begin{equation}
        \label{eq:centered_serie}
        \sum_{k \geqslant 1} \frac{X_k - \E[X_k]}{k} z^k. 
    \end{equation}
    From Proposition \ref{prop:convergence_radius}, we know that the 
    convergence radius of $\sum_{k \geqslant 1} \frac{X_k}{k} z^k$ is at least $1$. 
    Let us check that the same holds for $\sum_{k \geqslant 1} \frac{\E[X_k]}{k} z^k$ 
    so that \eqref{eq:centered_serie} is well-defined for $z \in \D$. 
    Let $\epsilon > 0$. As in Proposition \ref{prop:convergence_radius}, there 
    exists $\ell_0 \geqslant 1$ such that for $\ell \geqslant \ell_0: $ 
    $\theta_\ell r^\ell \leqslant (1+\epsilon)^\ell$. Thus,
    \begin{equation*}
        \E[X_k] = \sum_{\ell | k} r^\ell \theta_\ell 
        \leqslant \sum_{\substack{\ell | k \\ \ell \leqslant \ell_0}} (r^\ell \theta_\ell - (1+\epsilon)^\ell) 
        +  \sum_{\ell | k} (1+\epsilon)^\ell
    \end{equation*}
    so that $|\E[X_k]| \leqslant (c + 1) \tau_k$ where 
    $\tau_k = \sum_{\ell | k} (1+ \epsilon)^k$ 
    and $c = \sum_{ \ell \leqslant \ell_0} |\theta_\ell r^\ell - (1+\epsilon)^\ell|$.
    The latter implies that $\limsup_k |\E[X_k]|^{1/k} \leqslant \limsup \tau_k^{1/k} = 1+\epsilon$
    from which one derives that the convergence radius of $\sum_{k \geqslant 1} \frac{\E[X_k]}{k} z^k$ 
    is greater than $\frac{1}{1+\epsilon}$. Since $\epsilon$ was arbitrary, the convergence radius 
    is greater or equal to one so that \eqref{eq:centered_serie} is well-defined for $z \in \D$.
    For $z, w \in \D$,
    \begin{align*}
        Cov(f(z), f(w)) &= \sum_{k, h} \frac{z^k \overline{w}^h}{kh} \sum_{\substack{i | h \\ j | k}} 
        ij Cov(Y_i, Y_j) \\
        &= \sum_{k, h} \frac{z^k \overline{w}^h}{kh} \sum_{\ell | k, \ell | h} 
        \ell^2 Var[Y_\ell] \\
        &= \sum_{k, h} \frac{z^k \overline{w}^h}{kh} \sum_{\ell | k, \ell | h} 
        \ell \theta_\ell r^\ell \\
        &= \sum_{\ell \geqslant 1} \frac{\theta_\ell r^\ell}{\ell} \sum_{a, b \geqslant 1} 
        \frac{z^{a \ell} \overline{w}^{b \ell}}{ab} \\
        &= \sum_{a, b \geqslant 1} \frac{1}{ab}  \sum_{\ell \geqslant 1} 
        \frac{\theta_\ell}{\ell} \left( rz^a \overline{w}^b \right)^\ell 
        = \sum_{a, b \geqslant 1} \frac{1}{ab} \, g \left( rz^a \overline{w}^b \right).
    \end{align*}

\end{proof}

\begin{remark}
    In the case of uniform permutations \cite{Coste_Lambert_Zhu} 
    or even for Ewens random permutations, that is, $\theta_k = \theta$ 
    for some $\theta > 0$, one has $r=1$ and $g(z) = - \theta \log(1 - z)$ 
    so that 
    \begin{equation*}
        Cov(f(z), f(w)) = - \theta \sum_{a, b \geqslant 1} 
        \frac{1}{ab} \log \left(1- z^a \overline{w}^b \right) \ ,
    \end{equation*}
    which is the analog of the log-correlations obtained for the Gaussian holomorphic chaos. 
    In general, the correlations for arbitrary sequences $\Theta$ are given by $g$. 
    Moreover, the expectation of the limit can be expressed using $G$ for any $z \in \D$,
    \begin{equation*}
        \E \left[ \prod_{k \geqslant 1} (1- z^k)^{Y_k} \right] 
        = \prod_{k \geqslant 1} \ed^{-\theta_k \frac{r^kz^k}{k}} = \frac{1}{G(rz)} \ .
    \end{equation*}
\end{remark}

\printbibliography

\end{document}